\documentclass[11pt]{amsart}
\usepackage[utf8]{inputenc}
\usepackage{fancyhdr}
\usepackage{amsmath}
\usepackage{amssymb}
\usepackage{graphicx}
\usepackage{caption}
\usepackage[colorlinks=true]{hyperref}
\hypersetup{urlcolor=blue, citecolor=red}
\usepackage{enumitem}

\newtheorem{theorem}{Theorem}[section]
\newtheorem{corollary}[theorem]{Corollary}
\newtheorem{lemma}[theorem]{Lemma}
\newtheorem{proposition}[theorem]{Proposition}
\theoremstyle{definition}
\newtheorem{definition}[theorem]{Definition}

\newtheorem{remark}[theorem]{Remark}

\newcommand*{\R}{\mathbb{R}}

\DeclareMathOperator{\conv}{conv}

\title[]{Differential inclusions and the chattering phenomenon}

\author[]{Martin Ivanov}
\address{Faculty of Mathematics and Informatics, Sofia University, 5 James Bourchier Blvd., 1164 Sofia, Bulgaria}
\email{mdobrinov@fmi.uni-sofia.bg}

\begin{document}

\maketitle

\section{Introduction}

\subsection{Formulation of the problem}

Let $h:\R^n \to \R$ be a smooth function. It determines two regions of the state space,
$$
    \Omega_+:=\{x\in\mathbb{R}^n:h(x)>0\}
    \quad \text{ and } \quad
    \Omega_-:=\{x\in\mathbb{R}^n:h(x)<0\},
$$
separated by the set
$$\Omega_0:=\{x\in\mathbb{R}^n:h(x)=0\}.$$

Let $f, g:\R^n \to \R^n$ be two smooth vector fields. We define the multi-valued mapping $F: \R^n \rightrightarrows \R^n$ by
\begin{equation*}
    F(x) =
    \begin{cases}
        \{f(x)\}, & x \in \Omega_+,\\
        \{f(x),g(x)\}, & x \in \Omega_0,\\
        \{g(x)\}, & x \in \Omega_-.
    \end{cases}
\end{equation*}

We assume that
\begin{equation}\label{A}
    f(x)\ne g(x) \text{ for every } x \in \Omega_0.
\end{equation}

We study the existence of local solutions of the differential inclusion
\begin{equation*}
    \dot{x}\in F(x), \qquad x(0)=x_0,
\end{equation*}
for an arbitrary $x_0\in\mathbb{R}^n$. More precisely, we ask whether
there exist $T>0$ and an absolutely continuous function
$x:[0,T] \to \R^n$ such that $x(0)=x_0$ and
$\dot{x}(t) \in F(x(t))$ for almost every $t\in[0,T]$. We call each such function $x$ a trajectory of $F$ starting from $x_0$.

\subsection{Previous results}

The main difficulty in studying the existence of trajectories of $F$ arises when the starting point belongs to the set $\Omega_0$, on which $F$ is discontinuous.
Outside $\Omega_0$, $F$ is single-valued, and the differential inclusion locally reduces to an ordinary differential equation with a smooth right-hand side. Hence, the local existence and uniqueness of trajectories of $F$ starting outside $\Omega_0$ follow directly.
However, on $\Omega_0$, $F$ is upper semi-continuous but has nonconvex values, and thus the standard local existence results for differential inclusions (see, e.g., \cite{Deimling} and \cite{Filippov88}) cannot be applied.

The existence of trajectories of $F$ starting from points of $\Omega_0$ depends on how the function $h$ changes along the trajectories of $f$ and $g$, which can be studied using the Lie derivatives of $h$ along $f$ and $g$.

Filippov studied the problem considered here in \cite{Filippov60} (see also Chapter 2, Section 10.2 of \cite{Filippov88}). His results can be summarized in a single statement, formulated in terms of the first-order Lie derivatives of $h$ along $f$ and $g$ (which we denote by $L_f h$ and $L_g h$, respectively), as follows.
A trajectory of $F$ starting from a point $x_0 \in \Omega_0$ satisfying $L_f h(x_0) L_g h(x_0) \ne 0$ exists if and only if $L_f h(x_0) L_g h(x_0)>0$ or $L_g h(x_0)<0<L_f h(x_0)$. When $x_0 \in \Omega_0$ satisfies $L_f h(x_0)<0<L_g h(x_0)$, neither $f$ nor $g$ is tangent to $\Omega_0$ in a neighborhood of $x_0$, while $f$ and $g$ point toward $\Omega_0$ from $\Omega_+$ and $\Omega_-$, respectively, and thus there is no trajectory of $F$ starting from $x_0$.

For a point $x_0 \in \Omega_0$ satisfying $L_f h(x_0) L_g h(x_0)=0$, it remains an open problem whether there exists a trajectory of $F$ starting from $x_0$. As we will show below, depending on the setting, such a trajectory may or may not exist. A partial result was obtained in \cite{DieciLopez} by Dieci and Lopez, who proved the existence of smooth trajectories starting from certain points of $\Omega_0$ at which exactly one of the Lie derivatives $L_f h$ and $L_g h$ is zero.
Their analysis can, in fact, be reformulated in terms of the Lie derivatives of $h$ along $f$ and $g$ of order up to 3.

The main difficulty in further studying the existence of trajectories starting from points $x_0 \in \Omega_0$ satisfying $L_f h(x_0) L_g h(x_0)=0$ is the possible presence of infinitely many points of nondifferentiability along such a trajectory, accumulating at its starting point. For example, this may occur when a trajectory of $F$ passes from $\Omega_+$ to $\Omega_-$, and vice versa, at infinitely many times accumulating at the starting time, a phenomenon known as chattering (see Example 3.2 in \cite{DieciLopez}).

\subsection{Chattering}

\subsection{Main result and steps of the proof}\label{introduction}

We first make the following assumption.

\begin{enumerate}[label=(A\arabic*)]
    \item\label{A1} We assume that there exists an even positive integer $k_0$ such that $L^k_f h(x) = L^k_g h(x)$ for each $k \in \{1,\dots,k_0-1\}$ and each $x \in \R^n$, and there exists $x_0 \in \R^n$ such that $L^{k_0}_f h(x_0) \ne L^{k_0}_g h(x_0)$.
\end{enumerate}

[In fact, we can remove the assumption that $k_0$ is even.]

Here, $L^k_f h$ and $L^k_g h$ denote the $k$-th-order Lie derivatives of $h$ along $f$ and $g$, respectively (see Definition \ref{LieDerivativeDef}).
Henceforth, in view of \ref{A1}, for each $k \in \{0,\dots,k_0-1\}$ and each $x \in \R^n$, we denote the common value of $L^k_f h(x)$ and $L^k_g h(x)$ by $L^k h(x)$.

Roughly speaking, $k_0$ is the order of the derivative of $h$ along a trajectory of $F$ at which, for the first time, the vector fields $f$ and $g$ may affect the corresponding derivative in different ways (see Proposition \ref{derivatives}). The role of $k_0$ is analogous to that of the intrinsic order and the relative degree in control theory (see, e.g., Definition 2.8.6 in \cite{SchattlerLedzewicz} for the former, and Definition 13.2 in \cite{Khalil} and the beginning of Chapter 4 in \cite{Isidori} for the latter). The precise connection with the notion of intrinsic order is explained in Section ???.

We now state our second assumption.

\begin{enumerate}[label=(A\arabic*),start=2]
   \item\label{A2} For each $x \in \R^n$ such that
    \begin{align*}
        h(x)&=L^1 h(x)=\dots=L^{k_0-1} h(x)=0
    \end{align*}
    we assume that $$L^{k_0}_f h(x) L^{k_0}_g h(x) \ne 0.$$
\end{enumerate}


We define the following sets.

$$S_0:= \left\{x \in \Omega_0: L^k_f h(x)
=0 \text{ for each } k \in \{1, \dots, k_0-1\}\right\},$$
$$S_1:= S_0 \setminus \{x : L^{k_0}_f h(x)<0<L^{k_0}_g h(x)\},$$
$$S_2:= S_0 \cap \{x : L^{k_0}_f h(x)<0<L^{k_0}_g h(x)\},$$
$$S_+:= \left\{x \in \Omega_0: L^k_f h(x)
>0 \text{ for each } k \in \{1, \dots, k_0-1\}\right\},$$
$$S_-:= \left\{x \in \Omega_0: L^k_f h(x)
<0 \text{ for each } k \in \{1, \dots, k_0-1\}\right\}.$$

We now state our main result.

\begin{theorem}\label{main}
    Let \ref{A1} and \ref{A2} hold.
    \begin{enumerate}[label=(\alph*),leftmargin=*]
        \item If $k_0=2$, a trajectory of $F$ starting from $x_0$ exists if and only if $x_0 \notin S_2$.

        \item If $k_0=4$, a trajectory of $F$ starting from $x_0$ exists if and only if $x_0 \notin S_2 \setminus \overline{S_+ \cup S_-}$.

        \item If $k_0>4$, for every $x_0 \notin S_2 \setminus \overline{S_+ \cup S_-}$, there exists a trajectory of $F$ starting from $x_0$.
    \end{enumerate}
\end{theorem}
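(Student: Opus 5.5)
We will organize the proof around the behaviour of $\phi(t):=h(x(t))$ along a candidate trajectory. By \ref{A1} and Proposition \ref{derivatives}, along any trajectory of $F$ the function $\phi$ is of class $C^{k_0-1}$ on $[0,T]$, and $\phi^{(k)}(t)=L^k h(x(t))$ for $k\le k_0-1$, regardless of how the trajectory switches between $f$ and $g$. Only the $k_0$-th derivative depends on the switching: it equals $L^{k_0}_f h$ on pieces where $\dot x=f$ and $L^{k_0}_g h$ where $\dot x=g$. The starting points then split into three kinds.

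If $x_0\notin\Omega_0$, a trajectory exists by classical ODE theory. If $x_0\in\Omega_0$ and the first nonzero value among $L^1h(x_0),\dots,L^{k_0-1}h(x_0)$ is $L^jh(x_0)$, then $\phi(t)\sim L^jh(x_0)t^j/j!$. We follow $f$ if this has the sign that pushes into $\Omega_+$, and $g$ otherwise. This is consistent because $L^j_fh=L^j_gh$, so the trajectory immediately leaves $\Omega_0$ and stays in the correct region for small $t>0$. This covers $S_\pm$ and, more generally, every point of $\Omega_0\setminus S_0$. For $x_0\in S_1$ we use \ref{A2}: $L^{k_0}_fh(x_0)$ and $L^{k_0}_gh(x_0)$ are both nonzero and do not have the pattern $L^{k_0}_f h<0<L^{k_0}_g h$. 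If $L^{k_0}_fh(x_0)>0$, follow $f$: since $k_0$ is even, $\phi(t)\approx L^{k_0}_fh(x_0)t^{k_0}/k_0!>0$. If instead $L^{k_0}_gh(x_0)<0$, follow $g$ in the symmetric way. The remaining sign pattern is $L^{k_0}_g h>0>L^{k_0}_f h$ forbidden, so $S_1$ is done.

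The substantive part is $x_0\in S_2$. First take $x_0\in S_2\cap\overline{S_+\cup S_-}$ with $k_0\ge4$. We construct a chattering trajectory, proceeding iteratively. Near $x_0$ there are points of $S_+\cup S_-$, where some lower derivative $L^jh$ is nonzero with both signs nearby. We alternate arcs of $f$ (in $\Omega_+$) and $g$ (in $\Omega_-$), choosing each switching point on $\Omega_0$ so that $L^1h,\dots,L^{k_0-1}h$ have the right signs to enter the next region. The switching times form a geometric-like sequence accumulating at $t=0$, which we parametrize backwards. The key estimates are:
\begin{itemize}
\item[(i)] a scaling analysis, writing $\phi$ locally as a polynomial of degree $k_0$ with leading coefficient $L^{k_0}_{f/g}h/k_0!$ plus lower coefficients of controlled size;
\item[(ii)] a proof that the total time and arclength of the chattering series converge, so the limit is absolutely continuous and satisfies $\dot x\in F(x)$ a.e.
\end{itemize}
We expect this construction, and in particular showing that the required switching configuration can be reached from points arbitrarily close to $x_0$, to be the main obstacle. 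As a first attempt we would use a fixed-point or shooting argument on the one-parameter family of switching points.

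For the nonexistence directions of (a) and (b), suppose $x_0\in S_2$ (and, for (b), $x_0\notin\overline{S_+\cup S_-}$). Then near $x_0$ the set $\Omega_0$ coincides with the set where $L^1h=\dots=L^{k_0-1}h$ has no sign-definite lower derivative, while $L^{k_0}_fh<0<L^{k_0}_gh$. Take a trajectory and the first time it lies in $\Omega_+$ on an interval. There $\phi^{(k_0)}<0$, and $\phi^{(k_0)}>0$ on $\Omega_-$ pieces.

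For $k_0=2$, $\phi$ is $C^1$ with $\phi(0)=\phi'(0)=0$. The sign of $\phi''$ forces $\phi$ to move toward zero from either side, i.e. $\phi\cdot\phi''<0$ a.e. where $\phi\neq0$. Integrating $\tfrac12(\phi')^2$ then gives $\phi\equiv0$, which is impossible because $f\ne g$ are transversal there.

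For $k_0=4$ we apply the same energy-type argument to $\phi$ and its derivatives. The assumption $x_0\notin\overline{S_+\cup S_-}$ controls the signs of $L^1h$ and $L^2h$ on $\Omega_0$ nearby, and this rules out entering either region. Finally, (c) only claims existence, so it follows from the first three existence arguments.
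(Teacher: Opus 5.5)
\textbf{Main gap: the existence step from $S_2$ is missing.} Your treatment of points outside $S_0$ and of $S_1$ matches the paper. The step that matters, however, is absent. For $x_0\in S_2\cap\overline{S_+\cup S_-}$ with $k_0\ge4$, this is the only existence case not covered by Proposition \ref{onOmega0} and the $S_1$ construction. You outline a direct chattering construction and defer its core to an unspecified fixed-point or shooting argument.

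That is where the whole difficulty lies. A backward construction must force the accumulation point of the switching points to be exactly the prescribed $x_0$. Near $x_0$ the set $S_0$ is a submanifold of codimension $k_0$, so it is unclear how a one-parameter family could achieve this. You also do not explain why successive arcs keep the sign pattern of $L^1h,\dots,L^{k_0-1}h$ needed to continue.

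The paper needs no shooting. Its key lemma is Proposition \ref{invariantSwitchings}: inside a neighbourhood $V_{x_0}$, an $f$-arc starting at a point of $S_+$ can only end at a point of $S_-$ (and symmetrically). Moreover, $|L^{k_0-1}h|$ at consecutive switching points increases by at least $K(\tau_2-\tau_1)$. The argument then runs as follows.
\begin{enumerate}
\item This monotone quantity lets trajectories be run \emph{forward} from points $x_k\in S_+\cup S_-$ with $x_k\to x_0$. They have only finitely many switchings before leaving $V_{x_0}$, and they live on a common interval $[0,T]$.
\item Arzel\`a--Ascoli and the convergence theorem for upper semicontinuous inclusions give a limit $y$ that is a trajectory of $\conv F$ from $x_0$.
\item Passing to the limit in $|L^{k_0-1}h(y_k(s_k))|\ge|L^{k_0-1}h(x_k)|+Ks_k$ shows $y(t)\notin S_2$ for $t>0$.
\item Proposition \ref{coincidence} then makes $y$ a trajectory of $F$.
\end{enumerate}
None of these ingredients appears in your plan: not the monotone quantity, not the compactness and convexification step, and not the return from $\conv F$ to $F$.

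\textbf{The case $k_0=2$.} The sign condition $\phi\phi''<0$ plus ``integrating $\tfrac12(\phi')^2$'' does not give $\phi\equiv0$. Suppose $|\phi''|$, while staying between two positive constants, is smaller on the rising half of each arc than on the falling half. Then $|\phi'|$ at consecutive zeros grows by a fixed factor. Running this backward produces a nontrivial chattering $\phi$ with $\phi(0)=\phi'(0)=0$.

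What excludes this is that $\phi''$ equals $L^2_fh(x(t))$ or $L^2_gh(x(t))$, and so varies Lipschitz-continuously in $t$ along each arc. The paper uses this through $|\Phi'(\tau_{n+1})|/|\Phi'(\tau_n)|-1=p_n$ with $|p_n|\le L(\tau_n-\tau_{n+1})$, together with Lemma \ref{log}.

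Your energy idea can be repaired into a genuinely shorter alternative to Theorem \ref{order1}. Set $A_f:=-L^2_fh>0$ and $A_g:=L^2_gh>0$ near $x_0$, and let $E:=\tfrac12(\phi')^2+A_f(x)\max\{\phi,0\}+A_g(x)\max\{-\phi,0\}$. This function is absolutely continuous. On arcs in $\Omega_\pm$ the $\phi'$-terms cancel, leaving $|E'|\le C|\phi|\le C'E$ a.e. Gronwall then gives $E\equiv0$, so the trajectory stays in $S_2$. That is impossible by the first part of the proof of Proposition \ref{chattering}.

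\textbf{The case $k_0=4$.} No nonexistence argument is needed, and what you sketch is not one. (Also, $\Omega_0$ near a point of $S_2$ does contain points of $P^\pm_o$, indeed of $S_\pm$.) By \ref{A1}, $L_{f-g}L^jh\equiv0$ for $j<k_0-1$, while $L_{f-g}L^{k_0-1}h(x_0)\ne0$ at $x_0\in S_2$. Hence $(h,L^1h,\dots,L^{k_0-1}h)$ is a submersion at $x_0$ and is locally open. This yields the inclusion $S_2\subseteq\overline{S_+}\cap\overline{S_-}$ stated in the paper. Thus $S_2\setminus\overline{S_+\cup S_-}=\emptyset$, the ``only if'' in (b) is vacuous, and (b) and (c) rest entirely on the existence step you have not supplied.
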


[In fact, we will prove that $S_2 \subset \overline{S_+ \cup S_-}$, which simplifies cases (b) and (c) of the main result: when $k_0 > 2$ a solution always exists.]

For $k_0=2$ and $k_0=4$, Theorem \ref{main} provides a necessary and sufficient condition for the existence of a trajectory of $F$. For $k_0>4$, Theorem \ref{main} provides only a sufficient condition for the existence of a trajectory of $F$. In this case, it remains an open problem whether there exist trajectories of $F$ starting from points in $S_2 \setminus \overline{S_+ \cup S_-}$.
In Section ???, we consider a class of differential inclusions arising in optimal control theory for which $k_0$ can be any even positive integer and $S_2 \subseteq \overline{S_+ \cup S_-}$. For this class, when $k_0>2$, Theorem \ref{main} guarantees the existence of a trajectory of $F$ for every starting point in $\R^n$.

\subsection{Structure of the paper}

\section{Preliminary results}

We first adopt the following convention. For a nonnegative integer $m$, a function is said to be of class $C^m$ on a closed interval $[a,b]$ if it is of class $C^m$ on $(a,b)$ and each of its derivatives up to order $m$ admits a continuous extension to $[a,b]$. A function is said to be of class $C^{\infty}$ (or smooth) on a closed interval $[a,b]$ if it is of class $C^m$ on $[a,b]$ for each integer $m \ge 0$.

We next recall the notion of a Lie derivative of the function $h$ along a smooth vector field, which will be used to study the existence and behavior of trajectories of $F$ starting from points in $\Omega_0$.

\begin{definition}[\cite{Isidori,Khalil,SchattlerLedzewicz}]\label{LieDerivativeDef}
    For a smooth vector field $Y$ on $\R^n$, the Lie derivative of the function $h$ along the vector field $Y$ at a point $x \in \R^n$ is defined by $$L_Y h(x) := D h(x) Y(x),$$ where $D h(x)$ denotes the gradient of the function $h$ at $x$, written as a row vector.

    The higher-order Lie derivatives of $h$ along $Y$ at a point $x \in \R^n$ are defined recursively by 
    $$L^0_Y h(x) := h(x)$$
    and
    $$L^k_Y h(x) := L_Y(L^{k-1}_Y h)(x)$$ for every integer $k \ge 1$.
\end{definition}

For $Y=f$ or $Y=g$, the following observation relates the Lie derivatives of $h$ along $Y$ to the successive derivatives of $h$ along trajectories of $Y$. The result follows directly by repeatedly applying the chain rule and using the fact that the trajectories of $Y$ are smooth.

\begin{proposition}\label{derivativesY}
    Let $Y=f$ or $Y=g$. If $\bar x$ is a trajectory of $Y$ defined on an interval $[0,\bar T]$, then the function $t \mapsto h(\bar x(t))$ is smooth on $[0,\bar T]$ and, for each $k \ge 1$ and each $t \in [0,\bar T]$, $$\frac{\mathrm{d}^k}{\mathrm{d}t^k}h(\bar x(t))=L_Y^k h(\bar x(t)).$$
\end{proposition}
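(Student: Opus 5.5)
The plan is to prove the claim by induction on $k$, using only the chain rule and the smoothness of $\bar x$. First I establish regularity. Since $Y$ is a smooth vector field and $\bar x$ solves $\dot{\bar x}(t) = Y(\bar x(t))$ on $[0,\bar T]$, a bootstrap argument shows that $\bar x$ is $C^\infty$ on $[0,\bar T]$ in the sense of the convention above. Indeed, if $\bar x$ is $C^m$, then $Y \circ \bar x$ is $C^m$, so $\dot{\bar x}$ is $C^m$ and $\bar x$ is $C^{m+1}$. Because $Y\circ\bar x$ is continuous on the closed interval, every derivative of $\bar x$ extends continuously to the endpoints. The function $t\mapsto h(\bar x(t))$ is a composition of the smooth map $h$ with a map that is smooth up to the endpoints, so it is smooth on $[0,\bar T]$.

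Next comes the identity, which I prove in the slightly stronger form: for every smooth $\varphi:\R^n\to\R$,
$$\frac{\mathrm{d}}{\mathrm{d}t}\varphi(\bar x(t)) = D\varphi(\bar x(t))\,\dot{\bar x}(t) = D\varphi(\bar x(t))\,Y(\bar x(t)) = L_Y\varphi(\bar x(t)).$$
The first equality is the chain rule, the second uses that $\bar x$ is a trajectory of $Y$, and the third is Definition \ref{LieDerivativeDef}. The case $k=1$ is this identity with $\varphi=h$. For the inductive step, assume $\frac{\mathrm{d}^{k}}{\mathrm{d}t^{k}}h(\bar x(t)) = L^{k}_Y h(\bar x(t))$. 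The function $L^{k}_Y h$ is smooth, because it is built from $h$ and the components of $Y$ by finitely many differentiations and products. So I can apply the identity with $\varphi = L^k_Y h$ and get
$$\frac{\mathrm{d}^{k+1}}{\mathrm{d}t^{k+1}}h(\bar x(t)) = L_Y(L^k_Y h)(\bar x(t)) = L^{k+1}_Y h(\bar x(t)),$$
which is the recursive definition of $L^{k+1}_Y h$.

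The only delicate point is the endpoints $t=0$ and $t=\bar T$, where "derivative" means the continuous extension allowed by the convention. This causes no trouble. The identity holds on $(0,\bar T)$, and its right-hand side $L^k_Y h(\bar x(t))$ is continuous on all of $[0,\bar T]$. Hence the continuous extension of the $k$-th derivative exists and equals $L^k_Y h(\bar x(t))$ at the endpoints too, which gives the claim for every $t\in[0,\bar T]$.
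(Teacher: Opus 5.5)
Your proof is correct and follows the same route as the paper, which states only that the result follows by repeatedly applying the chain rule together with the smoothness of trajectories of $Y$. Your smoothness bootstrap, the induction with $\varphi = L^k_Y h$, and the handling of the endpoints via continuous extensions simply spell out that one-line argument in detail.
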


For trajectories of $F$, the situation is different, since at each point $x \in \Omega_0$ both $f(x)$ and $g(x)$ are admissible velocities. However, under \ref{A1}, the Lie derivatives of $h$ along $f$ and $g$ coincide up to order $k_0-1$, which yields a result, similar to that of Proposition \ref{derivativesY}, for the derivatives of $h$ along trajectories of $F$ up to order $k_0-1$. In fact, we prove this result for the possibly broader class of trajectories of the convexified multi-valued mapping $\conv F : \R^n \rightrightarrows \R^n$, defined by 
\begin{equation*}
    \conv F(x) :=
    \begin{cases}
        \{f(x)\}, & x \in \Omega_+,\\
        \{\lambda f(x)+(1-\lambda)g(x) : \lambda \in [0,1]\}, & x \in \Omega_0,\\
        \{g(x)\}, & x \in \Omega_-,
    \end{cases}
\end{equation*}
since we will also need it later.

\begin{proposition}\label{derivatives}
    Let $\bar x$ be a trajectory of $\conv F$ defined on an interval $[0,\bar T]$. Then the function $t \mapsto h(\bar x(t))$ is of class $C^{k_0-1}$ on $[0,\bar T]$ and, for each $k \in \{1,\dots,k_0-1\}$ and each $t \in [0,\bar T]$, $$\frac{\mathrm{d}^k}{\mathrm{d}t^k}h(\bar x(t))=L^k h(\bar x(t)).$$
\end{proposition}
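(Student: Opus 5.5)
The plan is to argue by induction on $k$, proving for each $k \in \{1,\dots,k_0-1\}$ that $t\mapsto h(\bar x(t))$ is of class $C^k$ on $[0,\bar T]$ with $\frac{\mathrm{d}^k}{\mathrm{d}t^k}h(\bar x(t))=L^k h(\bar x(t))$. The tool throughout is the chain rule for a composition of a smooth function with an absolutely continuous curve. If $\phi:\R^n\to\R$ is smooth and $\bar x$ is absolutely continuous, then $\phi\circ\bar x$ is absolutely continuous. Moreover,
\[
\frac{\mathrm{d}}{\mathrm{d}t}\phi(\bar x(t)) = D\phi(\bar x(t))\,\dot{\bar x}(t)
\]
for almost every $t$.

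For the base case, take $\phi=h$. At almost every $t$ we have $\dot{\bar x}(t)\in\conv F(\bar x(t))$, and there are three cases. If $\bar x(t)\in\Omega_+$, then $Dh\,\dot{\bar x}=L_fh(\bar x(t))$. If $\bar x(t)\in\Omega_-$, then $Dh\,\dot{\bar x}=L_gh(\bar x(t))$. If $\bar x(t)\in\Omega_0$, then $\dot{\bar x}(t)=\lambda f+(1-\lambda)g$ for some $\lambda\in[0,1]$, so
\[
Dh\,\dot{\bar x}=\lambda L_fh+(1-\lambda)L_gh .
\]
By \ref{A1}, whenever $k_0\ge 2$ the two first-order Lie derivatives coincide, so in all three cases the derivative equals $L^1h(\bar x(t))$ almost everywhere. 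The function $L^1h\circ\bar x$ is continuous on $[0,\bar T]$. An absolutely continuous function whose a.e. derivative agrees with a continuous function $c$ satisfies $h(\bar x(t))=h(\bar x(0))+\int_0^t c$. It is therefore $C^1$ on $[0,\bar T]$, with derivative $L^1h(\bar x(t))$ everywhere, including one-sided derivatives at the endpoints, which matches the paper's convention.

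For the inductive step, suppose $k\le k_0-2$ and $\frac{\mathrm{d}^k}{\mathrm{d}t^k}h(\bar x(t))=L^kh(\bar x(t))$ on $[0,\bar T]$. Apply the same argument with $\phi=L^kh$, which is smooth as a Lie derivative of smooth data. Its a.e. derivative along $\bar x$ lies in the convex hull of $\{L_f(L^kh),\,L_g(L^kh)\}$ evaluated at $\bar x(t)$, restricted to the appropriate endpoint off $\Omega_0$.

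The main point to check is that $L_f(L^kh)=L^{k+1}_fh$ and $L_g(L^kh)=L^{k+1}_gh$. This holds because $L^kh=L^k_fh=L^k_gh$ as functions on all of $\R^n$: we can use the $f$-representation when differentiating along $f$ and the $g$-representation along $g$, and the recursive Definition \ref{LieDerivativeDef} gives exactly $L^{k+1}_fh$ and $L^{k+1}_gh$. Since $k+1\le k_0-1$, \ref{A1} makes these equal to $L^{k+1}h$. So the a.e. derivative of $L^kh\circ\bar x$ is the continuous function $L^{k+1}h\circ\bar x$, and the integral argument upgrades this to a genuine $C^1$ statement. This yields $C^{k+1}$ regularity together with the formula.

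The only delicate point is that the identities in \ref{A1} must hold identically on $\R^n$ rather than just pointwise on $\Omega_0$. That is what allows the Lie derivative of the common function $L^kh$ to be computed along either field, and the assumption provides exactly this. Beyond that, the one justification needed is the a.e. chain rule for smooth functions composed with absolutely continuous curves, which is standard. If $k_0=1$ the statement is vacuous apart from the continuity of $h\circ\bar x$, which holds since $\bar x$ is absolutely continuous.
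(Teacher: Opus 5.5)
Your proposal is correct and follows essentially the same argument as the paper. Both use induction on $k$, the a.e.\ chain rule for the absolutely continuous function $L^k h\circ\bar x$, assumption (A1) to get $DL^kh(\bar x(t))\dot{\bar x}(t)=L^{k+1}h(\bar x(t))$ for every convex combination of $f$ and $g$, and the Newton--Leibniz formula to upgrade the a.e.\ identity to an everywhere derivative (one-sided at the endpoints). Your case split over $\Omega_+,\Omega_-,\Omega_0$ is just an unpacked form of the paper's inclusion $\conv F(x)\subseteq\{\lambda f(x)+(1-\lambda)g(x):\lambda\in[0,1]\}$, and your remark that $L_f(L^kh)=L^{k+1}_fh$ because $L^kh=L^k_fh$ identically on $\R^n$ makes explicit a step the paper uses silently.
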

\begin{proof}
    We set $\Phi(t):=h(\bar x(t))$ for each $t \in [0,\bar T]$.
    The function $\Phi$ is absolutely continuous as a composition of a smooth function and an absolutely continuous function defined on a compact interval. Hence, for almost every $t \in [0,\bar T]$, we have $\Phi'(t)=Dh(\bar x(t))\dot{\bar x}(t)$.
    
    By \ref{A1}, $Dh(x)f(x)=Dh(x)g(x)=L^1 h(x)$ for each $x \in \R^n$, and thus $Dh(x)(\lambda f(x)+(1-\lambda)g(x))=L^1 h(x)$ for each $x \in \R^n$ and each $\lambda \in [0,1]$. Since
    \begin{equation}\label{incl}
        \dot{\bar x}(t) \in \conv F(\bar x(t)) \subseteq \{\lambda f(\bar x(t))+(1-\lambda)g(\bar x(t)): \lambda \in [0,1]\}
    \end{equation}
    for almost every $t \in [0,\bar T]$, it follows that
    \begin{equation*}
        \Phi'(t)=Dh(\bar x(t))\dot{\bar x}(t) = L^1 h(\bar x(t))
    \end{equation*}
    for almost every $t \in [0,\bar T]$.
    
    Since $t \mapsto L^1 h(\bar x(t))$ is continuous on $[0,\bar T]$, the Newton--Leibniz theorem implies that the function $\Phi$ is differentiable everywhere on $[0,\bar T]$ and $\Phi'(t)=L^1 h(\bar x(t))$ for each $t \in [0,\bar T]$ (the derivatives at $t=0$ and $t=\bar T$ are one-sided). This completes the proof in the case where $k_0=2$.

    Assume now that $k_0 > 2$. We will prove by induction that, for each $k \in \{1,\dots,k_0-1\}$, the function $\Phi^{(k-1)}$ is differentiable on $[0,\bar T]$ and $\Phi^{(k)}(t)=L^k h(\bar x(t))$ for each $t \in [0,\bar T]$.
    
    The base case $k=1$ was verified above.
    Suppose that, for some $k \in \{1,\dots,k_0-2\}$, $\Phi^{(k-1)}$ is differentiable on $[0,\bar T]$ and $\Phi^{(k)}(t)=L^k h(\bar x(t))$ for each $t \in [0,\bar T]$. We will prove that $\Phi^{(k)}$ is differentiable on $[0,\bar T]$ and $\Phi^{(k+1)}(t)=L^{k+1} h(\bar x(t))$ for each $t \in [0,\bar T]$.

    The proof proceeds in the same way as in the base case. The function $\Phi^{(k)}$ is absolutely continuous and $\Phi^{(k+1)}(t)=DL^k h(\bar x(t))\dot{\bar x}(t)$ for almost every $t \in [0,\bar T]$.
    
    Since $k+1 < k_0$, \ref{A1} gives $DL^k h(x)f(x)=DL^k h(x)g(x)=L^{k+1} h(x)$ for each $x \in \R^n$. Consequently, $DL^k h(x)(\lambda f(x)+(1-\lambda)g(x))=L^{k+1} h(x)$ for each $x \in \R^n$ and each $\lambda \in [0,1]$. Using again $(\ref{incl})$, we obtain
    \begin{equation*}
        \Phi^{(k+1)}(t)=DL^k h(\bar x(t))\dot{\bar x}(t) = L^{k+1} h(\bar x(t))
    \end{equation*}
    for almost every $t \in [0,\bar T]$.
    
    Since $t \mapsto L^{k+1} h(\bar x(t))$ is continuous on $[0,\bar T]$, the Newton--Leibniz theorem implies that the function $\Phi^{(k)}$ is differentiable everywhere on $[0,\bar T]$ and $\Phi^{(k+1)}(t)=L^{k+1} h(\bar x(t))$ for each $t \in [0,\bar T]$ (the derivatives at $t=0$ and $t=\bar T$ are one-sided). This completes the induction and the proof.
\end{proof}

\section{Trajectories starting outside the singular set}

In this section, we study the existence of trajectories of $F$ starting from points in $\Omega_+ \cup \Omega_- \cup (\Omega_0 \setminus S_0)$.
Each point $x_0 \in \Omega_+ \cup \Omega_-$ has a neighborhood on which $F$ is single-valued and coincides with $f$ if $x_0 \in \Omega_+$ and with $g$ if $x_0 \in \Omega_-$. Hence, for any such $x_0$, the local existence and uniqueness of a trajectory of $F$ through $x_0$ follows immediately from the theory of ordinary differential equations.

To study the existence and the behavior of trajectories of $F$ starting from points in $\Omega_0 \setminus S_0$, we partition $\Omega_0 \setminus S_0$ according to the order and the sign of the first nonzero Lie derivative among $L^1 h, L^2 h, \dots, L^{k_0-1} h$ (note that such a nonzero Lie derivative always exists by the definition of $S_0$).

For each $x \in \Omega_0 \setminus S_0$, we define
$$m(x):=\min\{k \in \{1,\dots,k_0-1\} : L^k h(x) \ne 0\}.$$

We then define the following subsets of $\Omega_0 \setminus S_0$:
$$P^+_o:=\{x \in \Omega_0 \setminus S_0: m(x) \text{ is odd and } L^{m(x)} h(x)>0\},$$
$$P^-_o:=\{x \in \Omega_0 \setminus S_0: m(x) \text{ is odd and } L^{m(x)} h(x)<0\},$$
$$P^+_e:=\{x \in \Omega_0 \setminus S_0: m(x) \text{ is even and } L^{m(x)} h(x)>0\},$$
$$P^-_e:=\{x \in \Omega_0 \setminus S_0: m(x) \text{ is even and } L^{m(x)} h(x)<0\}.$$
The family $\{P^+_o,P^-_o,P^+_e,P^-_e\}$ forms a partition of $\Omega_0 \setminus S_0$.

We now prove that, locally, for each point $x_0 \in \R^n \setminus S_0$, there exists a unique trajectory of $F$ through $x_0$, and we describe its behavior depending on which of the sets introduced above (among $\Omega_+$, $\Omega_-$, and the sets of the partition of $\Omega_0\setminus S_0$) contains $x_0$.

\begin{proposition}\label{onOmega0}
    For each $x_0 \in \R^n \setminus S_0$, there exist $\delta>0$ and a unique trajectory $\bar x$ of $F$ defined on $(-\delta,\delta)$ such that $\bar x(0)=x_0$. Moreover:
    \begin{enumerate}[label=(\alph*),leftmargin=*]
        \item if $x_0 \in \widetilde\Omega_+:=\Omega_+ \cup P^+_e$, then $\bar x(t) \in \Omega_+$ for each $t \in (-\delta,0) \cup (0,\delta)$;

        \item if $x_0 \in \widetilde\Omega_-:=\Omega_- \cup P^-_e$, then $\bar x(t) \in \Omega_-$ for each $t \in (-\delta,0) \cup (0,\delta)$;

        \item if $x_0 \in P^+_o$, then $\bar x(t) \in \Omega_-$ for each $t \in (-\delta,0)$ and $\bar x(t) \in \Omega_+$ for each $t \in (0,\delta)$;

        \item if $x_0 \in P^-_o$, then $\bar x(t) \in \Omega_+$ for each $t \in (-\delta,0)$ and $\bar x(t) \in \Omega_-$ for each $t \in (0,\delta)$.
    \end{enumerate}
\end{proposition}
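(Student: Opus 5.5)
The idea is to reduce everything to the scalar function $\Phi(t)=h(\bar x(t))$. Along any trajectory of $\conv F$, Proposition \ref{derivatives} controls $\Phi$ up to order $k_0-1$, and $m(x_0)\le k_0-1$. The plan is therefore to build the trajectory by hand from the flows of $f$ and $g$, and then prove uniqueness by showing that any trajectory must lie on the predicted side of $\Omega_0$ for $t\neq 0$.

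For $x_0\in\Omega_+\cup\Omega_-$, everything follows from ODE theory, as noted before the proposition. So let $x_0\in\Omega_0\setminus S_0$ and set $m=m(x_0)$. By definition $L^k h(x_0)=0$ for $1\le k<m$ and $L^m h(x_0)\ne0$, and of course $h(x_0)=0$.

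\textbf{Key step (sign lemma).} Let $\bar x$ be any trajectory of $\conv F$ on an interval containing $0$, with $\bar x(0)=x_0$. Since $m\le k_0-1$, Proposition \ref{derivatives} (applied on $[0,\delta)$, and on $(-\delta,0]$ after reversing time, or directly on the two-sided interval) gives that $\Phi$ is $C^{m}$ with $\Phi^{(k)}(0)=0$ for $k<m$ and $\Phi^{(m)}(0)=L^m h(x_0)\ne0$. Taylor's formula with continuous $\Phi^{(m)}$ then yields
\[
\Phi(t)=\tfrac{t^m}{m!}\,\Phi^{(m)}(\theta t)
\quad\text{for some }\theta\in(0,1).
\]
Since $\Phi^{(m)}(s)=L^m h(\bar x(s))$ stays bounded away from $0$ with a fixed sign for small $|s|$, the sign of $\Phi(t)$ for small $t\ne0$ equals $\operatorname{sign}\bigl(t^m L^m h(x_0)\bigr)$. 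This gives exactly the four patterns (a)–(d): for even $m$ the sign is the same on both sides of $0$, and for odd $m$ it flips. The same argument applies verbatim to the flows of $f$ and $g$ by Proposition \ref{derivativesY}.

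\textbf{Existence.} In case (a) with $x_0\in P^+_e$, take $\bar x$ to be the integral curve of $f$ through $x_0$ on $(-\delta,\delta)$. By the sign lemma applied to $f$, it lies in $\Omega_+$ for $t\ne0$, so $\dot{\bar x}=f(\bar x)\in F(\bar x)$ everywhere; at $t=0$ this also holds because $f(x_0)\in F(x_0)$. Case (b) is symmetric, using $g$. In case (c), glue the $g$-curve on $(-\delta,0]$ to the $f$-curve on $[0,\delta)$. The result is absolutely continuous (indeed piecewise smooth), and the sign lemma places it in $\Omega_-$ for $t<0$ and in $\Omega_+$ for $t>0$, so the inclusion holds for almost every $t$. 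Case (d) is symmetric.

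\textbf{Uniqueness.} Let $y$ be any trajectory of $F$ with $y(0)=x_0$ on some $(-\delta',\delta')$. Since $F\subset\conv F$, the sign lemma applies to $y$ and determines, on a possibly smaller interval, the open region ($\Omega_+$ or $\Omega_-$) containing $y(t)$ for $t>0$ and for $t<0$. On each of these half-intervals $F$ is single-valued, equal to $f$ or $g$. Hence $y$ solves a smooth ODE on $(0,\delta')$ with $y(t)\to x_0$ as $t\to0^+$, and by continuity it is a solution on $[0,\delta')$. Uniqueness for smooth ODEs then forces $y=\bar x$ there, and the same holds on $(-\delta',0]$.

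\textbf{Main obstacle.} The delicate point is the sign lemma. It must hold for an arbitrary trajectory, which a priori could chatter, so we cannot assume in advance which vector field is being followed. This is exactly why we rely on Proposition \ref{derivatives} for $\conv F$, which gives the needed regularity of $\Phi$ without knowing the switching structure, and on the fact that $m(x_0)\le k_0-1$. For negative times, we apply Proposition \ref{derivatives} on $[-\delta,0]$ via the time shift $t\mapsto t+\delta$. Note that this shift does not reverse time, so the proposition applies unchanged.
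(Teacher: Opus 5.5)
Your approach is essentially the paper's: a Taylor/Lagrange sign argument, using Proposition \ref{derivatives} for arbitrary trajectories and Proposition \ref{derivativesY} for the flows of $f$ and $g$, then gluing the $g$-curve and the $f$-curve in the odd case, then ODE uniqueness. Existence and the sign patterns (a)--(d) are fine.

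There is one gap, in the uniqueness step. Your sign lemma controls the sign of $h(y(t))$ only for $|t|$ below a threshold that depends on $y$. You only know that $L^m h(y(s))$ keeps its sign for small $|s|$, so you get $y=\bar x$ on a trajectory-dependent neighborhood of $0$. The sentence ``Hence $y$ solves a smooth ODE on $(0,\delta')$'' then silently replaces that smaller interval with the whole one. The statement asserts uniqueness on the fixed interval $(-\delta,\delta)$, so one more step is needed.

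The paper closes this as follows. It first fixes an open neighborhood $V$ of $x_0$ on which $L^{m}h$ has constant sign, and chooses $\delta$ so that $\bar x$ stays in $V$. For an arbitrary trajectory $y$, it lets $\delta_1$ be the first exit time of $y$ from $V$. On $[0,\delta_1)$ the Lagrange point $y(\zeta(t))$ lies in $V$, so the sign conclusion holds uniformly there, which gives $y=\bar x$ on $[0,\delta_1)$. Finally, $\delta_1<\delta$ is impossible, because $y(\delta_1)=\bar x(\delta_1)\in V$ contradicts the definition of $\delta_1$.

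Two other fixes also work:
\begin{itemize}
\item A continuation argument. The set of $t\in[0,\delta)$ with $y=\bar x$ on $[0,t]$ is nonempty and closed. It is also open, since for $t_1>0$ the point $\bar x(t_1)$ lies in the open set $\Omega_+$ or $\Omega_-$, where $F$ is single-valued and smooth.
\item A uniform speed bound. Take $V=B(x_0,r)$ and let $C$ bound $|f|$ and $|g|$ on $V$. Then every trajectory from $x_0$ stays in $V$ for $|t|<r/C$, which makes the threshold uniform.
\end{itemize}

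The same remark applies to negative times, and to your appeal to ODE theory for $x_0\in\Omega_+\cup\Omega_-$.
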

\begin{proof}
    We first prove case (a). We consider two subcases.
    
    \smallskip\noindent\textit{Case (a.1): $x_0 \in \Omega_+$.}\hypertarget{casea1}{}
    Since $h$ is continuous, the set $\Omega_+$ is open, and thus there exists an open neighborhood $V \subseteq \Omega_+$ of $x_0$ on which $F$ is single-valued and coincides with $f$. Since $f$ is smooth on $\R^n$ (and thus on $V$), there exist a positive $\delta$ and a unique solution $\bar x$ of
    \begin{equation}\label{diffeq1}
        \dot{x}(t) = f(x(t)), \, x(0)=x_0,
    \end{equation}
    on $(-\delta,\delta)$ such that $\bar x(t) \in V$ for each $t \in (-\delta,\delta)$. It follows that $\bar x$ is also a trajectory of $F$ on $(-\delta,\delta)$.

    We next prove that $\bar x$ is the unique trajectory of $F$ on $(-\delta,\delta)$ such that $\bar x(0)=x_0$. Let $\widetilde x$ be any trajectory of $F$ defined on $(-\delta,\delta)$ such that $\widetilde{x}(0)=x_0$. We set $$\delta_1:=\sup\{t \in (0,\delta) : \widetilde x(\tau) \in V \text{ for each } \tau \in [0,t)\}.$$
    Then $\widetilde x(t) \in V$ for each $t \in [0,\delta_1)$.
    Since $F$ coincides with $f$ on $V$, it follows that $\widetilde x$ is also a solution of $(\ref{diffeq1})$ on $[0,\delta_1)$. Since $(\ref{diffeq1})$ has a unique solution on $[0,\delta_1) \subseteq [0,\delta)$, we have $\bar x(t)=\widetilde x(t)$ for each $t \in [0,\delta_1)$. Suppose that $\delta_1 < \delta$. We have $\widetilde x(\delta_1) \in \overline V \setminus V$; otherwise, this would contradict the definition of $\delta_1$. Since $\bar x=\widetilde x$ on $[0,\delta_1)$, continuity gives $\bar x(\delta_1)=\widetilde x(\delta_1) \in \overline V \setminus V$, which contradicts the fact that $\bar x$ lies in $V$ on $[0,\delta)$. Therefore, $\delta_1=\delta$ and $\bar x(t)=\widetilde x(t)$ for each $t \in [0,\delta)$. In the same way, we obtain that $\bar x(t)=\widetilde x(t)$ for each $t \in (-\delta,0]$. Thus, $\bar x$ is the unique trajectory of $F$ on $(-\delta,\delta)$ such that $\bar x(0)=x_0$.

    \smallskip\noindent\textit{Case (a.2): $x_0 \in P_e^+$.}\hypertarget{casea2}{}    
    We set $m_0:=m(x_0)$. Since $L^{m_0} h(x_0)>0$, there exists an open neighborhood $V$ of $x_0$ such that $L^{m_0} h(x) > 0$ for each $x \in V$.
    As in the proof of case (a.1), there exist a positive $\delta$ and a unique solution $\bar x$ of $(\ref{diffeq1})$ on $(-\delta,\delta)$ such that $\bar x(t) \in V$ for each $t \in (-\delta,\delta)$. We will show that $\bar x$ is a trajectory of $F$ on $(-\delta,\delta)$. We set $\Phi(t):=h(\bar x(t))$ for each $t \in (-\delta,\delta)$. Proposition \ref{derivativesY} implies, in particular, that $\Phi$ is of class $C^{m_0}$ on $(-\delta,\delta)$, and for each $k \in \{1,\dots,m_0\}$ and each $t \in (-\delta,\delta)$, we have $\Phi^{(k)}(t)=L^k h(\bar x(t))$.

    We use the Taylor expansion of $\Phi$ around zero with the Lagrange form of the remainder: for each $t \in (-\delta,\delta)$ we have $$\Phi(t)=\sum_{k=0}^{m_0-1}\frac{\Phi^{(k)}(0)}{k!}t^k+\frac{\Phi^{(m_0)}(\xi(t))}{m_0!}t^{m_0},$$ where $\xi(t)$ is between $0$ and $t$.

    Since $\Phi^{(k)}(0)=L^k h(x_0)=0$ for each $k \in \{0,\dots,m_0-1\}$, $\Phi^{(m_0)}(\xi(t))=L^{m_0} h(\bar x(\xi(t)))>0$ for each $t \in (-\delta,\delta)$ (because $\bar x$ stays in $V$ on $(-\delta,\delta)$), and $m_0$ is even, we have $$\Phi(t) = \frac{\Phi^{(m_0)}(\xi(t))}{m_0!}t^{m_0} > 0$$ for each $t \in (-\delta,0) \cup (0,\delta)$. (Note that the fact that $m_0$ is even plays a role only on $(-\delta,0)$, to ensure $t^{m_0}>0$ there.) Thus, $\bar x(t) \in \Omega_+$ for each $t \in (-\delta,0) \cup (0,\delta)$. Hence, $F(\bar x(t))=\{f(\bar x(t))\}$ for each $t \in (-\delta,0) \cup (0,\delta)$.   
    Since $\bar x$ is a trajectory of $f$ on $(-\delta,\delta)$, it follows that $\bar x$ is a trajectory of $F$ on $(-\delta,\delta)$.

    We next prove that $\bar x$ is the unique trajectory of $F$ on $(-\delta,\delta)$ such that $\bar x(0)=x_0$.
    Let $\widetilde x$ be any trajectory of $F$ on $(-\delta,\delta)$ such that $\widetilde x(0)=x_0$. 
    We set $$\delta_1:=\sup\{t \in (0,\delta) : \widetilde x(\tau) \in V \text{ for each } \tau \in [0,t)\}.$$
    Then $\widetilde x(t) \in V$ for each $t \in [0,\delta_1)$.

    We set $\Psi(t):=h(\widetilde x(t))$ for each $t \in [0,\delta_1)$. Since $m_0 \le k_0-1$, Proposition \ref{derivatives} implies, in particular, that $\Psi$ is of class $C^{m_0}$ on $[0,\delta_1)$, and for each $k \in \{1,\dots,m_0\}$ and each $t \in [0,\delta_1)$, we have $\Psi^{(k)}(t)=L^k h(\widetilde x(t))$.

    We use the Taylor expansion of $\Psi$ on the right-hand side of zero with the Lagrange form of the remainder: for each $t \in [0,\delta_1)$ we have $$\Psi(t)=\sum_{k=0}^{m_0-1}\frac{\Psi^{(k)}(0)}{k!}t^k+\frac{\Psi^{(m_0)}(\zeta(t))}{m_0!}t^{m_0},$$ where $\zeta(t) \in (0,t)$.

    Since $\Psi^{(k)}(0)=L^k h(x_0)=0$ for each $k \in \{0,\dots,m_0-1\}$ and $\Psi^{(m_0)}(\zeta(t))=L^{m_0} h(\widetilde x(\zeta(t)))>0$ for each $t \in [0,\delta_1)$ (because $\widetilde x$ stays in $V$ on $[0,\delta_1)$), we have $$\Psi(t) = \frac{\Psi^{(m_0)}(\zeta(t))}{m_0!}t^{m_0} > 0$$ for each $t \in (0,\delta_1)$. Thus, $\widetilde x(t) \in \Omega_+$ for each $t \in (0,\delta_1)$. Hence, $F(\widetilde x(t))=\{f(\widetilde x(t))\}$ for each $t \in (0,\delta_1)$. Since $\widetilde x$ is a trajectory of $F$ on $[0,\delta_1)$, it follows that $\widetilde x$ is a trajectory of $f$ on $[0,\delta_1)$. Since $\widetilde x(0)=\bar x(0)=x_0$ and $\bar x$ is the unique solution of $(\ref{diffeq1})$ on $[0,\delta_1)$, we conclude that $\widetilde x(t)=\bar x(t)$ for each $t \in [0,\delta_1)$. As in the proof of case (a.1), it follows that $\delta_1 = \delta$ and $\bar x(t)=\widetilde x(t)$ for each $t \in [0,\delta)$. Analogously, we obtain that $\bar x(t)=\widetilde x(t)$ for each $t \in (-\delta,0]$. Thus, $\bar x$ is the unique trajectory of $F$ on $(-\delta,\delta)$ such that $\bar x(0)=x_0$. This completes the proof of case (a).

    Case (b) follows by exactly the same argument as in case (a): replace $\Omega_+$, $P_e^+$, $f$ and ``$>0$'' with $\Omega_-$, $P_e^-$, $g$ and ``$<0$'', respectively.
    
    The proof of case (c) is similar to that of case (a.2). We briefly outline the differences.
    Let $x_0 \in P^+_{o}$. Then there exists an open neighborhood $V$ of $x_0$ such that $L^{m(x_0)} h(x) > 0$ for each $x \in V$.
    As in the proof of case (a.2), there exist a positive $\delta'$ and a unique solution $\bar x_1$ of $(\ref{diffeq1})$ on $[0,\delta')$ such that $\bar x_1(t) \in V$ for each $t \in [0,\delta')$.

    We consider the differential equation
    \begin{equation}\label{diffeq2}
        \dot{x}(t) = g(x(t)), \, x(0)=x_0.
    \end{equation}
    Since $g$ is smooth on $\R^n$ (and thus on $V$), there exist a positive $\delta''$ and a unique solution $\bar x_2$ of $(\ref{diffeq2})$ on $(-\delta'',0]$ such that $\bar x_2(t) \in V$ for each $t \in (-\delta'',0]$.

    We set $\delta:=\min\{\delta',\delta''\}$. We define $\bar x:(-\delta,\delta) \to V$ by 
    \begin{equation*}
        \bar x(t) = \begin{cases}
            \bar x_2(t), &\text{if } t \in (-\delta,0)\\
            \bar x_1(t), &\text{if } t \in [0,\delta)\\
        \end{cases}.
    \end{equation*}

    As in the proof of case (a.2), using one-sided Taylor expansions of $t \mapsto h(\bar x(t))$ on each side of zero and the fact that $m(x_0)$ is odd, we can show that $\bar x(t) \in \Omega_+$ for each $t \in (0,\delta)$ and $\bar x(t) \in \Omega_-$ for each $t \in (-\delta,0)$. Thus, $F(\bar x(t))=\{f(\bar x(t))\}$ for each $t \in (0,\delta)$ and $F(\bar x(t))=\{g(\bar x(t))\}$ for each $t \in (-\delta,0)$. Since $\bar x$ is a trajectory of $f$ on $(0,\delta)$ and of $g$ on $(-\delta,0)$, it follows that $\bar x$ is a trajectory of $F$ on $(-\delta,\delta)$.

    The proof that $\bar x$ is the unique trajectory of $F$ on $(-\delta,\delta)$ follows by exactly the same argument as in case (a.2).

    The proof of case (d) is analogous to that of case (c): interchange $f$ with $g$ and $\Omega_+$ with $\Omega_-$, and replace $P_o^+$ and ``$>0$'' with $P_o^-$ and ``$<0$'', respectively.
\end{proof}

\begin{remark}\label{one-sided}
    For a trajectory $\bar x$ of $F$ defined on an interval $[0,\bar T]$, Proposition \ref{onOmega0}(a) implies that the set $$I_+:=\{t \in [0,\bar T] : \bar x(t) \in \widetilde\Omega_+\}$$ is open in $[0,\bar T]$ and that $\bar x$ is a trajectory of $f$ on $I_+$. 
    In fact, if $(t_1,t_2)$ is a maximal interval contained in $I_+$, since $t \mapsto f(\bar x(t))$ is continuous on $[t_1,t_2]$, the Newton--Leibniz theorem implies that $\dot{\bar x}(t) = f(\bar x(t))$ for each $t \in [t_1,t_2]$, where the derivatives at $t=t_1$ and $t=t_2$ are one-sided.
    
    The same holds, with $f$ and $\widetilde\Omega_+$ replaced by $g$ and $\widetilde\Omega_-$, for the set $$I_-:=\{t \in [0,\bar T] : \bar x(t) \in \widetilde\Omega_-\},$$ using Proposition \ref{onOmega0}(b) in place of (a).  
\end{remark}

In view of Proposition \ref{onOmega0}(c)--(d) and Remark \ref{one-sided}, we call the sets $P^+_o$ and $P^-_o$ {\it switching sets}. Given a trajectory $\bar x$ of $F$ defined on $[0,\bar T]$, we call a time $t_0 \in (0,\bar T)$ a {\it switching time} of $\bar x$, and the point $\bar x(t_0)$ a {\it switching point} of $\bar x$, if $\bar x(t_0) \in P^+_o \cup P^-_o$.

\begin{remark}
    Each trajectory of $F$ is nondifferentiable at its switching times. Indeed, let $\bar x$ be a trajectory of $F$ defined on $[0,\bar T]$, and let $t_0 \in (0,\bar T)$ be a switching time of $\bar x$. Without loss of generality, we assume that $\bar x(t_0) \in P^+_o$. By Proposition \ref{onOmega0}(c) and Remark \ref{one-sided} (applied to $I_-$), it follows that $\bar x$ has a left derivative at $t_0$, given by $\dot{\bar x}(t_0-)=g(\bar x(t_0))$. Similarly, by Proposition \ref{onOmega0}(c) and Remark \ref{one-sided} (applied to $I_+$), it follows that $\bar x$ has a right derivative at $t_0$, given by $\dot{\bar x}(t_0+)=f(\bar x(t_0))$.
    Since $f$ and $g$ differ at every point of $\Omega_0$ (see $(\ref{A})$), and $\bar x(t_0) \in P^+_o \subseteq \Omega_0$, we have $f(\bar x(t_0)) \ne g(\bar x(t_0))$. Therefore, the left and right derivatives of $\bar x$ at $t_0$ are different, and hence $\bar x$ is nondifferentiable at $t_0$.
\end{remark}

\begin{remark}\label{accumulation}
    Each accumulation point of switching points of a trajectory of $F$ necessarily belongs to the singular set $S_0$. Indeed, let $\bar x$ be a trajectory of $F$ defined on $[0,\bar T]$, and let $\{\tau_n\}_{n=1}^{\infty} \subset [0,\bar T]$ be a sequence of switching times of $\bar x$ such that $\lim_{n \to \infty} \tau_n=\tau_0 \in [0,\bar T]$. Suppose that $\bar x(\tau_0) \notin S_0$. Then $\bar x(\tau_0) \in \widetilde\Omega_+ \cup \widetilde\Omega_- \cup P_o^+ \cup P_o^-$. Regardless of which of these four sets contains $\bar x(\tau_0)$, Proposition \ref{onOmega0} gives $\delta_0>0$ such that $\bar x(t) \in \widetilde\Omega_+ \cup \widetilde\Omega_-$ for each $t \in I:=((\tau_0-\delta_0,\tau_0) \cup (\tau_0,\tau_0+\delta_0)) \cap [0,\bar T]$. Since $\{\tau_n\}_{n=1}^{\infty}$ converges to $\tau_0$, there exists a positive integer $n_0$ such that $\tau_{n_0} \in I$. However, by the definition of a switching time, $\bar x(\tau_{n_0}) \in P_o^+ \cup P_o^-$, which contradicts $\bar x(\tau_{n_0}) \in \widetilde\Omega_+ \cup \widetilde\Omega_-$. Hence, $\bar x(\tau_0) \in S_0$.
\end{remark}

We next show that each trajectory of $\conv F$ is also a trajectory of $F$ as long as it remains outside the singular set. This fact will be used in studying the existence of trajectories of $F$ starting from points of $S_2$ for $k_0>2$.

\begin{proposition}\label{coincidence}
    Let $\bar x$ be a trajectory of $\conv F$ defined on $[0, \bar T]$. Then $\bar x$ is a trajectory of $F$ on the open subset $W:=\{t \in [0,\bar T]: \bar x(t) \notin S_0\}$ of $[0,\bar T]$, that is, $\dot{\bar x}(t) \in F(\bar x(t))$ for almost every $t \in W$.  
\end{proposition}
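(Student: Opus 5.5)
The plan is to show that, for almost every $t \in W$, either $\bar x(t) \notin \Omega_0$, or $\bar x(t)$ lies in $\Omega_0 \setminus S_0$ and $t$ belongs to a null set. At points where $\bar x(t) \in \Omega_+ \cup \Omega_-$ the two maps agree: $\conv F(\bar x(t)) = F(\bar x(t))$, since both are the singleton $\{f(\bar x(t))\}$ or $\{g(\bar x(t))\}$. So the inclusion $\dot{\bar x}(t) \in F(\bar x(t))$ holds for almost every such $t$. It therefore suffices to prove that
\[
Z:=\{t \in W : \bar x(t) \in \Omega_0\}
\]
has Lebesgue measure zero.

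First, $W$ is open in $[0,\bar T]$. Indeed, $S_0$ is closed, being the common zero set in $\R^n$ of the continuous functions $h, L^1h,\dots,L^{k_0-1}h$, and $\bar x$ is continuous.

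Next, fix $t_0 \in Z$ and put $\Phi(t):=h(\bar x(t))$. Since $\bar x(t_0) \in \Omega_0 \setminus S_0$, the integer $m:=m(\bar x(t_0)) \le k_0-1$ is defined. By Proposition \ref{derivatives}, $\Phi$ is $C^{k_0-1}$ with $\Phi^{(k)}(t)=L^k h(\bar x(t))$. Hence $\Phi^{(k)}(t_0)=0$ for $k<m$ and $\Phi^{(m)}(t_0)\neq 0$. By continuity, $\Phi^{(m)}$ has a fixed sign on a neighborhood of $t_0$. A Taylor expansion with Lagrange remainder, exactly as in the proof of Proposition \ref{onOmega0}(a.2), then gives $\Phi(t)\neq 0$ for $t\neq t_0$ near $t_0$. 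Thus every point of $Z$ is isolated in $Z$, so $Z$ is at most countable and in particular null.

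This completes the argument: for almost every $t\in W$ we have $t \notin Z$ and $\dot{\bar x}(t)\in\conv F(\bar x(t))$, and for such $t$ the first paragraph gives $\dot{\bar x}(t) \in F(\bar x(t))$.

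The only delicate point is making sure the Taylor argument applies to a trajectory of $\conv F$, which is merely absolutely continuous. That is exactly what Proposition \ref{derivatives} guarantees, since it was proved for $\conv F$, so no real obstacle remains. Note that we never need $\Phi^{(k_0)}$, because $m\le k_0-1$.
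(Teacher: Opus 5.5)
Your proof is correct, but it follows a different route from the paper's.

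\textbf{The paper's route.} The paper argues through local uniqueness. It first observes that Proposition \ref{onOmega0} (existence and uniqueness of a trajectory through a point outside $S_0$) remains valid for $\conv F$. It justifies this only by inspection: that proof uses Proposition \ref{derivatives}, and $\conv F=F$ off $\Omega_0$. Then, near each $t_0\in W$, the unique $F$-trajectory through $\bar x(t_0)$ is also a $\conv F$-trajectory, so it must coincide with $\bar x$. A countable cover of $W$ by such intervals finishes the argument.

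\textbf{Your route.} You isolate the only set on which $F$ and $\conv F$ differ, namely $\Omega_0$. You then show directly that the times in $W$ at which $\bar x$ meets $\Omega_0$ form a discrete, hence countable and null, set. For this you use Proposition \ref{derivatives} and a Lagrange-remainder Taylor expansion of order $m(\bar x(t_0))\le k_0-1$, valid for either parity of $m$.

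\textbf{Comparison.} Your argument is shorter and more self-contained. It needs neither the existence part of Proposition \ref{onOmega0}, nor the $\conv F$-version of its uniqueness part, nor a covering argument. It uses only the Taylor step that drives that uniqueness proof. The paper's route yields a bit more structure immediately: locally, $\bar x$ is exactly the $F$-trajectory described in Proposition \ref{onOmega0}, with the switching pattern (a)--(d). That structure also follows from your result afterwards, by applying Proposition \ref{onOmega0} to $\bar x$ once it is known to be an $F$-trajectory on $W$.
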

\begin{proof}
    We first note that Proposition \ref{onOmega0} remains true for $\conv F$: its proof uses Proposition \ref{derivatives}, which holds for the trajectories of $\conv F$ as well, and, since $\conv F = F$ outside $\Omega_0$, the remainder of the argument applies unchanged.

    Now, let $t_0 \in W$. Then $x_0:=\bar x(t_0) \in \R^n \setminus S_0$. By the analogue of Proposition \ref{onOmega0} for $\conv F$ noted above, there exists $\delta_0>0$ such that $\bar x$ is the unique trajectory of $\conv F$ on $(t_0-\delta_0,t_0+\delta_0) \cap [0,\bar T]$ satisfying $\bar x(t_0)=x_0$.
    
    By Proposition \ref{onOmega0}, there exists a unique trajectory $\widetilde x$ of $F$ on $(t_0-\delta_0,t_0+\delta_0) \cap [0,\bar T]$ such that $\widetilde x(t_0)=x_0$ (we shrink $\delta_0$ if necessary). Since $F(x) \subseteq \conv F(x)$ for each $x \in \R^n$, $\widetilde x$ is also a trajectory of $\conv F$ on $(t_0-\delta_0,t_0+\delta_0) \cap [0,\bar T]$.

    Since $\bar x$ is the unique trajectory of $\conv F$ on $(t_0-\delta_0,t_0+\delta_0) \cap [0,\bar T]$, it follows that $\bar x(t)=\widetilde x(t)$ for each $t \in (t_0-\delta_0,t_0+\delta_0) \cap [0,\bar T]$, and thus $\bar x$ is also a trajectory of $F$ on $(t_0-\delta_0,t_0+\delta_0) \cap [0,\bar T]$.

    We have shown that for each $t \in W$ there exists $\delta_t >0$ such that $\bar x$ is a trajectory of $F$ on the interval $(t-\delta_t,t+\delta_t) \cap [0,\bar T]$. Since $W$ can be covered by at most countably many such intervals, it follows that $\bar x$ is a trajectory of $F$ on $W$. This completes the proof.
\end{proof}

\section{Trajectories starting from the singular set}\label{singularSection}

Recall that the singular set is divided into two subsets:
$$S_1:= S_0 \setminus \{x : L^{k_0}_f h(x)<0<L^{k_0}_g h(x)\},$$
$$S_2:= S_0 \cap \{x : L^{k_0}_f h(x)<0<L^{k_0}_g h(x)\}.$$

We first study the existence of trajectories of $F$ starting from points in $S_1$. By \ref{A2}, at each point $x_0 \in S_1$, either $L^{k_0}_g h(x_0)<0<L^{k_0}_f h(x_0)$ or $L^{k_0}_f h(x_0)L^{k_0}_g h(x_0)>0$. Thus, at each point $x_0 \in S_1$, at least one of $L^{k_0}_f h(x_0) > 0$ and $L^{k_0}_g h(x_0) < 0$ holds. Using this fact, we prove in the following proposition that, for each point of $S_1$, there exists a smooth trajectory of $F$ starting from that point.

\begin{proposition}
    If $x_0 \in S_1$ is such that $L_f^{k_0} h(x_0)>0$ ($L_g^{k_0} h(x_0)<0$, respectively), then there exist $\delta>0$ and a smooth trajectory $\bar x$ of $F$ defined on $[0,\delta)$ such that $\bar x(0)=x_0$ and $\bar x(t) \in \Omega_+$ ($\bar x(t) \in \Omega_-$, respectively) for each $t \in (0,\delta)$.
\end{proposition}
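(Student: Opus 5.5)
The natural candidate is the trajectory of the single smooth field $f$ through $x_0$. I treat the case $L_f^{k_0} h(x_0)>0$; the case $L_g^{k_0} h(x_0)<0$ is symmetric, with $g$, $\Omega_-$ and ``$<0$'' in place of $f$, $\Omega_+$ and ``$>0$''.

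\emph{Construction.} By continuity of $L^{k_0}_f h$, there is an open neighborhood $V$ of $x_0$ on which $L^{k_0}_f h>0$. Standard ODE theory gives $\delta>0$ and a unique solution $\bar x$ of (\ref{diffeq1}) on $[0,\delta)$ with $\bar x(t)\in V$ for all $t\in[0,\delta)$. Since $f$ is smooth, $\bar x$ is smooth on $[0,\delta)$.

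\emph{Sign of $h$ along $\bar x$.} Set $\Phi(t):=h(\bar x(t))$. By Proposition \ref{derivativesY} with $Y=f$, $\Phi$ is smooth and $\Phi^{(k)}(t)=L^k_f h(\bar x(t))$ for every $k$. Since $x_0\in S_0\subseteq\Omega_0$, we have $\Phi(0)=h(x_0)=0$ and $\Phi^{(k)}(0)=L^k_f h(x_0)=0$ for $k=1,\dots,k_0-1$. Taylor's formula with Lagrange remainder then gives
$$\Phi(t)=\frac{L^{k_0}_f h(\bar x(\xi(t)))}{k_0!}\,t^{k_0},\qquad \xi(t)\in(0,t),$$
for each $t\in(0,\delta)$. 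The factor $L^{k_0}_f h(\bar x(\xi(t)))$ is positive because $\bar x(\xi(t))\in V$, and $t^{k_0}>0$ for $t>0$ (parity of $k_0$ plays no role here). Hence $\Phi(t)>0$, so $\bar x(t)\in\Omega_+$ for all $t\in(0,\delta)$.

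\emph{Admissibility.} For $t\in(0,\delta)$ we have $F(\bar x(t))=\{f(\bar x(t))\}$, and $\dot{\bar x}(t)=f(\bar x(t))$ holds there. At $t=0$ the inclusion also holds, since $f(x_0)\in F(x_0)$ because $x_0\in\Omega_0$; in any case a single point is a null set. Therefore $\bar x$ is a smooth trajectory of $F$ on $[0,\delta)$ starting at $x_0$.

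\emph{Main obstacle.} There is essentially no serious obstacle. The one point needing care is that the vanishing of the derivatives up to order $k_0-1$ at $x_0$ holds for the $f$-Lie derivatives specifically. This follows from \ref{A1}, since $L^k_f h=L^k h$ for $k<k_0$, together with the definition of $S_0$. We also need the remainder term to be evaluated inside $V$, which is ensured by shrinking $\delta$ so that $\bar x([0,\delta))\subseteq V$.
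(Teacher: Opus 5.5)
Your proposal is correct and takes essentially the same route as the paper, which reruns the existence part of case (a.2) of Proposition \ref{onOmega0} on $[0,\delta)$ with $m_0$ replaced by $k_0$ and $L^k h$ by $L^k_f h$: you flow along $f$, use a neighborhood on which $L^{k_0}_f h>0$, and apply a one-sided Taylor expansion with Lagrange remainder. One minor point: $L^k_f h(x_0)=0$ for $k<k_0$ follows directly from the definition of $S_0$, so you do not need to invoke \ref{A1} at that step.
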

\begin{proof}
    The case where $x_0 \in S_1$ is such that $L_f^{k_0} h(x_0)>0$ can be proved in the same way as the existence of a trajectory of $F$ is proved in case \hyperlink{casea2}{(a.2)} of Proposition \ref{onOmega0}: it suffices to consider only $[0,\delta)$ instead of $(-\delta,\delta)$, and to replace $m_0$ with $k_0$ and $L^k h$ with $L^k_f h$ for each $k \in \{1,\dots,k_0\}$. The proof of the case where $x_0 \in S_1$ is such that $L_g^{k_0} h(x_0)<0$ is identical, with $\Omega_+$, $f$ and ``$>0$'' replaced with $\Omega_-$, $g$ and ``$<0$'', respectively.
\end{proof}

We next study the existence of trajectories of $F$ starting
from points in $S_2$. We first show that, if such a trajectory exists, then it must be nondifferentiable at infinitely many times accumulating at its starting time.

\begin{proposition}\label{chattering}
    If $\bar x$ is a trajectory of $F$ starting from a point $x_0 \in S_2$, then there exist infinitely many switching points of $\bar x$ accumulating at its starting point $x_0$.
\end{proposition}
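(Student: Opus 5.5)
We argue by contradiction and show that switching times of $\bar x$ must accumulate at $t=0$. Since $\bar x(\tau_n)\to x_0$ whenever $\tau_n\to 0$, and switching points lie in $P_o^+\cup P_o^-$, which is disjoint from $S_0\ni x_0$, these are infinitely many switching points distinct from $x_0$ and converging to it.

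\textbf{Setup.} Since $x_0\in S_2$, we have $L^{k_0}_f h(x_0)<0<L^{k_0}_g h(x_0)$. By continuity there is an open neighborhood $V$ of $x_0$ with $L^{k_0}_f h<0<L^{k_0}_g h$ on $V$. Suppose, contrary to the claim, that there is $T'\in(0,\bar T]$ such that $(0,T')$ contains no switching time and $\bar x(t)\in V$ for $t\in[0,T')$. Then for each $t\in(0,T')$ the point $\bar x(t)$ lies in exactly one of $\widetilde\Omega_+$, $\widetilde\Omega_-$ and $S_0$. Let $I_\pm:=\{t\in(0,T'):\bar x(t)\in\widetilde\Omega_\pm\}$, and let $K:=(0,T')\setminus(I_+\cup I_-)$ be the set of times with $\bar x(t)\in S_0$. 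By Remark \ref{one-sided}, the sets $I_\pm$ are open, and $\bar x$ is a trajectory of $f$ on $I_+$ and of $g$ on $I_-$, with one-sided derivatives at the endpoints of their components.

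\textbf{Step 1: $I_+=\emptyset$ (and, symmetrically, $I_-=\emptyset$).} Suppose $(a,b)$ is a connected component of $I_+$. Its left endpoint $a$ is either $0$ or belongs to $K$; in both cases $\bar x(a)\in S_0$. Set $\Phi(t):=h(\bar x(t))$. Proposition \ref{derivatives} gives $\Phi^{(j)}(a)=L^j h(\bar x(a))=0$ for $j\in\{0,\dots,k_0-1\}$. On $[a,b)$ the curve $\bar x$ is a trajectory of $f$, so by Proposition \ref{derivativesY} $\Phi$ is smooth there with $\Phi^{(k_0)}(t)=L^{k_0}_f h(\bar x(t))<0$. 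The one-sided Taylor expansion with Lagrange remainder, as in case (a.2) of Proposition \ref{onOmega0}, then yields $\Phi(t)<0$ for $t\in(a,b)$. This contradicts $\bar x(t)\in\widetilde\Omega_+\subseteq\{h\ge 0\}$. The argument for $I_-$ is identical, using $g$ and $L^{k_0}_g h>0$.

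\textbf{Step 2: $\bar x(t)\in S_0$ for all $t\in[0,T')$ is impossible.} By Step 1, $\bar x(t)\in S_0$ on $[0,T')$, so $\Psi(t):=L^{k_0-1}h(\bar x(t))\equiv 0$ there. On the other hand, $\Psi$ is absolutely continuous and, for almost every $t$, $\Psi'(t)=DL^{k_0-1}h(\bar x(t))\dot{\bar x}(t)$. Since $\dot{\bar x}(t)\in\{f(\bar x(t)),g(\bar x(t))\}$ a.e., we get $\Psi'(t)\in\{L^{k_0}_f h(\bar x(t)),L^{k_0}_g h(\bar x(t))\}$ a.e. Both of these values are nonzero on $V$. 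This contradicts $\Psi'\equiv0$ a.e., so switching times accumulate at $0$.

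\textbf{Main obstacle.} The delicate step is Step 1: we must ensure that every endpoint of a component of $I_\pm$ is a point of $S_0$, so that all derivatives of $\Phi$ up to order $k_0-1$ vanish there. This is exactly where the absence of switching times is used. Without it, an endpoint could lie in $P_o^\pm$, and the Taylor argument would break down. Remark \ref{one-sided} supplies the needed smoothness of $\Phi$ up to the endpoint $a$.
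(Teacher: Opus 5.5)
Your proof is correct and follows essentially the same argument as the paper. Under the no-switching hypothesis, a one-sided Taylor expansion at the left endpoint of a stretch in $\widetilde\Omega_\pm$ (an endpoint lying in the singular set) contradicts the sign of $L^{k_0}_f h$ or $L^{k_0}_g h$, and staying in the singular set contradicts $\frac{\mathrm{d}}{\mathrm{d}t}L^{k_0-1}h(\bar x(t))\in\{L^{k_0}_f h(\bar x(t)),L^{k_0}_g h(\bar x(t))\}\not\ni 0$ a.e.; the only difference is organizational. You rule out every component of $I_\pm$ first and then treat the all-singular case, whereas the paper treats the all-$S_2$ case first and then finds a single entry time $t_2$ via an infimum.
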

\begin{proof}    
    Assume the contrary: there exists $\bar T>0$ such that the trajectory $\bar x$ has no switching points on the interval $(0,\bar T]$, that is, $\bar x(t) \notin P^+_o \cup P^-_o$ for each $t \in (0,\bar T]$.

    Since $\bar x(0)=x_0 \in S_2$, we have $L^{k_0}_f h(\bar x(0)) < 0 < L^{k_0}_g h(\bar x(0))$. By continuity, after possibly shrinking $\bar T$, we assume that
    \begin{equation}\label{inequalityChattering}
        L^{k_0}_f h(\bar x(t)) < 0 < L^{k_0}_g h(\bar x(t))
    \end{equation}
    for each $t \in [0,\bar T]$.

    We set $\Phi(t):=h(\bar x(t))$ for each $t \in [0, \bar T]$. By Proposition \ref{derivatives}, the function $\Phi$ is of class $C^{k_0-1}$ on $[0, \bar T]$, and for each $k \in \{1,\dots,k_0-1\}$ and each $t \in [0, \bar T]$, we have $\Phi^{(k)}(t)=L^k h(\bar x(t))$. Moreover, $\Phi^{(k_0-1)}$ is absolutely continuous on $[0, \bar T]$ as a composition of a smooth function and an absolutely continuous function defined on a compact interval, and thus $\Phi^{(k_0)}(t)=DL^{k_0-1} h(\bar x(t)) \dot{\bar x}(t)$ for almost every $t \in [0, \bar T]$.

    Suppose first that $\bar x(t) \in S_2$ for each $t \in [0, \bar T]$. Then $\Phi$ (and thus all its derivatives) vanishes identically on $[0, \bar T]$. In particular, for almost every $t \in [0, \bar T]$, we have
    \begin{equation}\label{equality}
        \Phi^{(k_0)}(t)=DL^{k_0-1} h(\bar x(t)) \dot{\bar x}(t)=0.
    \end{equation}
    
    As $\bar x$ is a trajectory of $F$, for almost every $t \in [0, \bar T]$, $\dot{\bar x}(t)$ equals $f(\bar x(t))$ or $g(\bar x(t))$. Combined with $(\ref{equality})$, it follows that, for almost every $t \in [0, \bar T]$, either $$L^{k_0}_f h(\bar x(t))=DL^{k_0-1}h(\bar x(t))f(\bar x(t))=0$$ or $$L^{k_0}_g h(\bar x(t))=DL^{k_0-1}h(\bar x(t))g(\bar x(t))=0.$$ This contradicts inequality $(\ref{inequalityChattering})$.

    Therefore, there exists $t_1 \in (0,\bar T]$ such that $\bar x(t_1) \notin S_2$. Since $\bar x(t_1) \notin P^+_o \cup P^-_o$ by assumption, $\bar x(t_1) \notin S_1$ by inequality $(\ref{inequalityChattering})$, and the family $\{\widetilde\Omega_+,\widetilde\Omega_-,P_o^+,P_o^-,S_1,S_2\}$ forms a partition of $\R^n$, it follows that $\bar x(t_1) \in \widetilde\Omega_+ \cup \widetilde\Omega_-$. Without loss of generality, we assume that $\bar x(t_1) \in \widetilde\Omega_+$. By Proposition \ref{onOmega0}(a), there exists $\delta>0$ such that $\bar x(t) \in \widetilde\Omega_+$ for each $t \in (t_1-\delta,t_1]$.    
    Let $$t_2:=\inf\{t \in [0,t_1) : \bar x(\tau) \in \widetilde\Omega_+ \text{ for each } \tau \in (t,t_1]\}.$$
    Then $0 \le t_2 \le t_1-\delta$, $\bar x(t) \in \widetilde\Omega_+$ for each $t \in (t_2,t_1]$, and $\bar x(t_2) \in S_2$. Indeed, $\bar x(t_2) \notin \widetilde\Omega_+ \cup \widetilde\Omega_-$; otherwise, this would contradict the definition of $t_2$. Also, $\bar x(t_2) \notin P_o^+ \cup P_o^-$ by assumption, and $\bar x(t_2) \notin S_1$ by inequality $(\ref{inequalityChattering})$.

    Since $\bar x(t) \in \widetilde\Omega_+$ for each $t \in (t_2,t_1]$, Proposition \ref{onOmega0}(a) implies that $\bar x$ is a trajectory of $f$ on $[t_2,t_1]$. Then Proposition \ref{derivativesY} implies, in particular, that $\Phi$ is of class $C^{k_0}$ on $[t_2,t_1]$ and $\Phi^{(k_0)}(t)=L^k_f h(\bar x(t))$ for each $t \in [t_2,t_1]$ (the derivative at $t=t_2$ is one-sided).
    We use the Taylor expansion of $\Phi$ on the right-hand side of $t_2$ with the Lagrange form of the remainder: for each $t \in (t_2,t_1]$ we have $$\Phi(t)=\sum_{k=0}^{k_0-1}\frac{\Phi^{(k)}(t_2)}{k!}(t-t_2)^k+\frac{\Phi^{(k_0)}(\xi(t))}{k_0!}(t-t_2)^{k_0},$$ where $\xi(t) \in (t_2,t)$.

    Since $\Phi^{(k)}(t_2)=L^k_f h(\bar x(t_2))=L^k h(\bar x(t_2))=0$ for each $k \in \{0,\dots,k_0-1\}$ (because $\bar x(t_2) \in S_2$) and $\Phi^{(k_0)}(\xi(t))=L^{k_0}_f h(\bar x(\xi(t)))<0$ for each $t \in (t_2,t_1]$ (because of $(\ref{inequalityChattering})$), we have $$\Phi(t) = \frac{\Phi^{(k_0)}(\xi(t))}{k_0!}(t-t_2)^{k_0} < 0$$ for each $t \in (t_2,t_1]$. Thus, $\bar x(t) \in \Omega_-$ for each $t \in (t_2,t_1]$, which contradicts the fact that $\bar x(t) \in \widetilde\Omega_+$ for each $t \in (t_2,t_1]$. This completes the proof.
\end{proof}

In particular, we proved that a trajectory of $F$ cannot exit $S_2$ directly through $\widetilde\Omega_+$ or through $\widetilde\Omega_-$. The following corollary describes the local behavior of a trajectory immediately after it exits $S_2$, provided that such an exit occurs.

\begin{corollary}\label{chatteringExit}
    Let $\bar x$ be a trajectory of $F$ defined on $[0,\bar T]$ and let $t_0, t_1 \in [0,\bar T]$ with $t_0<t_1$ be such that $\bar x(t_0) \in S_2$ and $\bar x(t) \notin S_2$ for each $t \in (t_0,t_1]$. Then there exists a strictly decreasing sequence $\{\tau_n\}_{n=1}^{\infty} \subset (t_0,t_1]$ such that each $\tau_n$ is a switching time of $\bar x$, $\lim_{n \to \infty} \tau_n=t_0$, and, for each $n \ge 1$, either $\bar x(t) \in \widetilde\Omega_+$ for each $t \in (\tau_{n+1},\tau_n)$ or $\bar x(t) \in \widetilde\Omega_-$ for each $t \in (\tau_{n+1},\tau_n)$.
\end{corollary}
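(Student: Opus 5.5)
The plan is to apply Proposition \ref{chattering} to a time-shifted copy of $\bar x$ and then use Remark \ref{accumulation} and Proposition \ref{onOmega0} to organize the switching times. First I reduce to a trajectory starting at $t_0$. The function $y(s):=\bar x(t_0+s)$, for $s\in[0,t_1-t_0]$, is a trajectory of $F$ with $y(0)=\bar x(t_0)\in S_2$. Proposition \ref{chattering} applied to $y$ shows that on every interval $(t_0,t_0+\varepsilon]$ there is a switching time of $\bar x$. Indeed, its proof by contradiction assumes no switching points on some $(0,\bar T]$ and derives a contradiction, and that argument works for every $\bar T$ small enough. Hence the set $\Sigma$ of switching times of $\bar x$ in $(t_0,t_1]$ has $t_0$ as an accumulation point, and every point of $\Sigma$ lies strictly to the right of $t_0$.

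Next I show that $\Sigma$ is locally finite in $(t_0,t_1]$. Let $\tau\in(t_0,t_1]$ be a limit of distinct switching times. By Remark \ref{accumulation}, $\bar x(\tau)\in S_0$. By hypothesis $\bar x(\tau)\notin S_2$, so $\bar x(\tau)\in S_1$. Here I use that $S_1$ points cannot occur while $L^{k_0}_f h<0<L^{k_0}_g h$ holds. After shrinking $t_1$ so that inequality $(\ref{inequalityChattering})$ holds on $[t_0,t_1]$, which is possible by continuity since $\bar x(t_0)\in S_2$, every point of $S_0$ visited in $(t_0,t_1]$ lies in $S_2$. This excludes $S_1$ and therefore rules out accumulation inside $(t_0,t_1]$. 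Shrinking $t_1$ causes no harm: the conclusion only needs $\tau_n\in(t_0,t_1]$, and a smaller $t_1$ makes this stronger. So on the shrunk interval, $\bar x(t)\in \widetilde\Omega_+\cup\widetilde\Omega_-\cup P^+_o\cup P^-_o$ for all $t\in(t_0,t_1]$, using that the family $\{\widetilde\Omega_\pm,P^\pm_o,S_1,S_2\}$ is a partition. Consequently $\Sigma$ is finite on every $[t_0+\varepsilon,t_1]$, its only accumulation point is $t_0$, and it is infinite. I enumerate $\Sigma$ in decreasing order as $\tau_1>\tau_2>\cdots$, which gives $\tau_n\downarrow t_0$. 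The limit is exactly $t_0$, since any other limit would be an accumulation point in $(t_0,t_1]$.

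Finally, for each $n$ the open interval $(\tau_{n+1},\tau_n)$ contains no switching time, so $\bar x(t)\in\widetilde\Omega_+\cup\widetilde\Omega_-$ there. By Remark \ref{one-sided}, the sets $I_+=\{t:\bar x(t)\in\widetilde\Omega_+\}$ and $I_-$ are open and disjoint. The interval $(\tau_{n+1},\tau_n)$ is connected and is covered by these two disjoint open sets, so it lies entirely in one of them. This gives the dichotomy in the statement.

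The main obstacle is the second step, namely excluding points of $S_1$, and hence accumulation points of switching times, inside $(t_0,t_1]$. The shrinking of $t_1$ via $(\ref{inequalityChattering})$ is the key device, and I must check that the conclusion survives the shrinking, which it does because the sequence only needs to lie in the smaller interval. A secondary point is to extract from the proof of Proposition \ref{chattering} that switching times exist arbitrarily close to $t_0$, rather than merely that some exist.
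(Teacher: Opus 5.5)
Your argument is correct and uses the same ingredients as the paper: Proposition \ref{chattering}, Remark \ref{accumulation}, shrinking $t_1$ so that $L^{k_0}_f h<0<L^{k_0}_g h$ along $\bar x$ on $[t_0,t_1]$, and Proposition \ref{onOmega0} via Remark \ref{one-sided}. However, you assemble them differently.

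The paper builds the sequence inductively backward in time. Given a switching time $\tau_n$ with, say, $\bar x(\tau_n)\in P^-_o$, it defines $\tau_{n+1}$ as the infimum of those $t$ for which $\bar x$ stays in $\widetilde\Omega_+$ on $(t,\tau_n)$. It then rules out $\bar x(\tau_{n+1})\in S_2$ by Proposition \ref{chattering} and $\bar x(\tau_{n+1})\in S_1$ by the sign condition. The dichotomy on $(\tau_{n+1},\tau_n)$ therefore holds by construction, and only at the end is the limit identified with $t_0$ via Remark \ref{accumulation}.

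You instead describe the set $\Sigma$ of all switching times in $(t_0,t_1]$ at once: it is infinite and has $t_0$ as its only accumulation point. You enumerate it in decreasing order and obtain the dichotomy from the connectedness of $(\tau_{n+1},\tau_n)$ together with the openness and disjointness of $I_+$ and $I_-$. This avoids the repeated infimum construction and the case split on $P^\pm_o$, and it shows that the $\tau_n$ are exactly the switching times in $(t_0,t_1]$.

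Both routes really use Proposition \ref{chattering} in the form ``there is a switching time in every $(t_0,t_0+\varepsilon]$''. You are right that this can be read off its proof. It also follows directly from its statement applied to the restriction of the shifted trajectory to $[0,\varepsilon]$.
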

\begin{proof}
    Since $\bar x(t_0) \in S_2$, we have $L^{k_0}_f h(\bar x(t_0)) < 0 < L^{k_0}_g h(\bar x(t_0))$. By continuity, after possibly shrinking $t_1$, we assume that
    \begin{equation}\label{inequalityChattering1}
        L^{k_0}_f h(\bar x(t)) < 0 < L^{k_0}_g h(\bar x(t))
    \end{equation}
    for each $t \in [t_0,t_1]$.
    
    By Proposition \ref{chattering}, there exists a switching time $\tau_1 \in (t_0,t_1]$ of $\bar x$.
    Without loss of generality, we assume that $\bar x(\tau_1) \in P^-_o$. According to Proposition \ref{onOmega0}(d), there exists $\delta_1>0$ such that $\bar x(t) \in \widetilde\Omega_+$ for all $t \in (\tau_1-\delta_1,\tau_1)$.
    Let $$\tau_2 := \inf\{t \ge t_0 : \bar x(s) \in \widetilde\Omega_+ \text{ for all } s \in (t,\tau_1)\}.$$

    Then $\bar x(t) \in \widetilde\Omega_+$ for each $t \in (\tau_2,\tau_1)$, and $\tau_2$ is the first switching time before $\tau_1$. Indeed, $\bar x(\tau_2) \notin \widetilde\Omega_+ \cup \widetilde\Omega_-$; otherwise, this would contradict the definition of $\tau_2$. Also, $\bar x(\tau_2) \notin S_2$, since this would contradict Proposition \ref{chattering}, and $\bar x(\tau_2) \notin S_1$ by inequality $(\ref{inequalityChattering1})$.

    Assume that there exists a positive integer $n_0$ such that $\tau_n$ is defined for each $n \le n_0$. Since $\tau_{n_0}$ is a switching time, $\bar x(\tau_{n_0}) \in P^-_o$ or $\bar x(\tau_{n_0}) \in P^+_o$. Without loss of generality, we assume that $\bar x(\tau_{n_0}) \in P^-_o$. According to Proposition \ref{onOmega0}(d), there exists $\delta_{n_0}>0$ such that $\bar x(t) \in \widetilde\Omega_+$ for all $t \in (\tau_{n_0}-\delta_{n_0},\tau_{n_0})$.
    Let $$\tau_{n_0+1} := \inf\{t \ge t_0 : \bar x(s) \in \widetilde\Omega_+ \text{ for all } s \in (t,\tau_{n_0})\}.$$  
    
    Then $\bar x(t) \in \widetilde\Omega_+$ for each $t \in (\tau_{n_0+1},\tau_{n_0})$. As for $\tau_2$ above, it follows that $\tau_{n_0+1}$ is the first switching time before $\tau_{n_0}$.

    In this way, we construct a sequence $\{\tau_n\}_{n=1}^{\infty}$ such that each $\tau_n$ is a switching time of $\bar x$, and, for each $n \ge 1$, either $\bar x(t) \in \widetilde\Omega_+$ for each $t \in (\tau_{n+1},\tau_n)$, or $\bar x(t) \in \widetilde\Omega_-$ for each $t \in (\tau_{n+1},\tau_n)$. Since $\{\tau_n\}_{n=1}^{\infty}$ is decreasing and bounded from below by $t_0$, it is convergent. We set $t_*:=\lim_{n \to \infty} \tau_n$. Then we have $\bar x(t_{*}) \in S_0$ (see Remark \ref{accumulation}). Since inequality $(\ref{inequalityChattering1})$ holds at $t=t_*$, it follows that $\bar x(t_{*}) \in S_2$. If $t_{*} > t_0$, then $\bar x(t_{*}) \notin S_2$, which is a contradiction. Thus, $t_{*}=t_0$ and the proof is completed.
\end{proof}

Whether trajectories of $F$ with the behavior described in Proposition \ref{chattering} and Corollary \ref{chatteringExit} actually exist remains an open question. As we will show below, the answer depends on the value of $k_0$.

\subsection{The case $k_0=2$}

In this case, $\widetilde\Omega_+=\Omega_+$, $\widetilde\Omega_-=\Omega_-$, the switching sets have the following simpler form:
$$P^+_o = \{x \in \R^n : h(x) = 0 \text{ and } L^1 h(x)>0\},$$
$$P^-_o = \{x \in \R^n : h(x) = 0 \text{ and } L^1 h(x)<0\},$$
and $S_2$ takes the following form:
$$S_2= \left\{x \in \R^n :\, \begin{array}{l}
    h(x)=L^1 h(x)=0,\\
    L^2_f h(x)<0<L^2_g h(x)
\end{array}\right\}.$$

We prove that trajectories of $F$ cannot have an accumulation point of switching points. This rules out the existence of trajectories of $F$ starting from $S_2$. The proof uses ideas from Proposition 15 and Lemma 16 of \cite{BoarottoSigalotti}, adapted to the present setting.

\begin{theorem}\label{order1}
    If $k_0=2$, then, for every $x_0 \in S_2$, there does not exist a trajectory of $F$ starting from $x_0$.
\end{theorem}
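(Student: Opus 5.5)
The plan is to argue by contradiction. The idea is to follow the quantity $|\Phi'|$ at the switching times backward toward the starting time, where $\Phi(t):=h(\bar x(t))$, and to show that it cannot tend to zero even though it must. Suppose $\bar x$ is a trajectory of $F$ on $[0,\bar T]$ with $\bar x(0)=x_0\in S_2$.

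\emph{Setup.} Shrinking $\bar T$, I may assume $\bar x([0,\bar T])$ lies in a compact neighborhood $K$ of $x_0$ on which
\begin{equation*}
L^2_f h\le -a<0<a\le L^2_g h \quad\text{and}\quad |L^2_f h|,\ |L^2_g h|,\ |L^3_f h|,\ |L^3_g h|\le M .
\end{equation*}
Let $t_0:=\sup\{t\in[0,\bar T]:\bar x(t)\in S_2\}$. If $t_0=\bar T$, I shrink $\bar T$ and reapply Proposition \ref{chattering}: the case where $\bar x$ stays in $S_2$ on a whole interval is excluded exactly as in that proof. So, after shrinking, $\bar x(t_0)\in S_2$ and $\bar x(t)\notin S_2$ on $(t_0,t_1]$ for some $t_1$.

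Corollary \ref{chatteringExit} then gives switching times $\tau_n\downarrow t_0$. Since $k_0=2$, we have $\widetilde\Omega_\pm=\Omega_\pm$. On each interval $[\tau_{n+1},\tau_n]$ the trajectory is an integral curve of $f$ (inside $\Omega_+$) or of $g$ (inside $\Omega_-$), by Remark \ref{one-sided}. By Proposition \ref{derivatives}, $\Phi$ is $C^1$ with $\Phi'=L^1h(\bar x)$ continuous on $[0,\bar T]$. Hence $y_n:=|\Phi'(\tau_n)|$ satisfies $y_n\to |L^1h(\bar x(t_0))|=0$. Moreover $y_n>0$, since $\bar x(\tau_n)\in P^\pm_o$.

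\emph{Key one-arc estimate.} Fix an arc $[\alpha,\beta]=[\tau_{n+1},\tau_n]$ and set $\Delta:=\beta-\alpha$. By Proposition \ref{derivativesY}, $\Phi$ is smooth there, with $\Phi''=L^2_Y h(\bar x)$ and $|\Phi'''|\le M$. Let $c:=\Phi''(\alpha)$, so $|c|\ge a$. Taylor's formula gives $\Phi'(t)=\Phi'(\alpha)+c(t-\alpha)+O(M(t-\alpha)^2)$. Integrating and using $\Phi(\alpha)=\Phi(\beta)=0$ yields
\begin{equation*}
\Phi'(\alpha)=-\tfrac{c\Delta}{2}+O(M\Delta^2), \qquad \Phi'(\beta)=\tfrac{c\Delta}{2}+O(M\Delta^2).
\end{equation*}
Two consequences follow once $\Delta$ is small, which holds for $n$ large since $\Delta\le\tau_n-t_0\to0$:
\begin{enumerate}[label=(\roman*)]
\item $\Delta\le C\,y_{n+1}$ and $\Delta\le C\,y_n$, where $C$ depends only on $a$ and $M$;
\item $|y_n-y_{n+1}|\le C\Delta^2\le C'\Delta\,y_n$.
\end{enumerate}
From (ii), $y_{n+1}\ge y_n(1-C'\Delta_n)\ge y_n e^{-2C'\Delta_n}$ once $\Delta_n$ is small, where $\Delta_n:=\tau_n-\tau_{n+1}$.

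\emph{Conclusion.} Multiplying these inequalities from some $N$ onward gives
\begin{equation*}
y_n\ \ge\ y_N\exp\Bigl(-2C'\sum_{m\ge N}\Delta_m\Bigr)=y_N\,e^{-2C'(\tau_N-t_0)}>0
\end{equation*}
for every $n\ge N$. This contradicts $y_n\to0$. Hence no trajectory starting from $x_0$ exists.

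The main obstacle is the one-arc estimate. The point is that the first-order terms cancel exactly: an arc leaves $\Omega_0$ with slope $\mp c\Delta/2$ and returns with slope $\pm c\Delta/2$. This cancellation is what makes the defect quadratic in $\Delta$, and therefore summable relative to $y_n$. The remaining care is to keep all constants uniform. This uses compactness of $K$, the lower bound $|c|\ge a$ (which gives $\Delta\asymp y$), and the bound on $L^3_Y h$, which controls the variation of $\Phi''$ along each arc. This is the adaptation of Proposition 15 and Lemma 16 of \cite{BoarottoSigalotti}.
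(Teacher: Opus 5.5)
Your proof is correct and follows essentially the same route as the paper: you take the last time $t_0$ at which the trajectory is in $S_2$, use Corollary \ref{chatteringExit} to get switching times $\tau_n \downarrow t_0$, and show that $|\Phi'(\tau_{n+1})|/|\Phi'(\tau_n)| = 1 + O(\tau_n-\tau_{n+1})$, which is summable and so contradicts $\Phi'(\tau_n)\to\Phi'(t_0)=0$. The differences are only technical: the paper gets the ratio defect as an exact identity from the Lagrange remainder and the mean value theorem, valid on every arc, and concludes with Lemma \ref{log}, whereas you use an integrated Taylor estimate valid for short arcs and bound the product directly by an exponential.
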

\begin{proof}
    Assume the contrary: there exist $x_0 \in S_2$ and a trajectory $\bar x$ of $F$ defined on $[0,\bar T]$ such that $\bar x(0)=x_0$. Since $L^2_f h(x_0) < 0 < L^2_g h(x_0)$, there exists a neighborhood $V$ of $x_0$ such that $L^2_f h(x) < 0 < L^2_g h(x)$ for each $x \in V$. We can assume that $\bar x(t) \in V$ for each $t \in [0,\bar T]$ (we shrink $\bar T$ if necessary).

    As in the proof of Proposition \ref{chattering}, we can show that the trajectory $\bar x$ cannot lie in $S_2$ on a nontrivial time interval. Thus, there exists $t_1 \in (0,\bar T]$ such that $\bar x(t_1) \notin S_2$, and hence $\bar x(t_1) \in V \setminus S_2$ (recall that $\bar x(t_1)\in V$). As $V \setminus S_2$ is relatively open in $V$, there exists $\delta > 0$ such that $\bar x(t) \in V \setminus S_2$ for each $t \in (t_1-\delta,t_1]$.  
    Let $$t_0 := \inf\{t \in [0,t_1): \bar x(\tau) \in V \setminus S_2 \text{ for each } \tau \in (t,t_1]\}.$$
    Then $\bar x(t) \in V \setminus S_2$ for each $t \in (t_0,t_1]$ and $\bar x(t_0) \in S_2$; otherwise, $\bar x(t_0) \in V \setminus S_2$, which would contradict the definition of $t_0$.
    
    By Corollary \ref{chatteringExit}, there exists a strictly decreasing sequence $\{\tau_n\}_{n=1}^{\infty} \subset (t_0,t_1]$ converging to $t_0$ such that each $\tau_n$ is a switching time of $\bar x$ and, for each $n \ge 1$, either $\bar x(t) \in \widetilde\Omega_-$ for each $t \in (\tau_{n+1},\tau_n)$ or $\bar x(t) \in \widetilde\Omega_+$ for each $t \in (\tau_{n+1},\tau_n)$. Without loss of generality, we assume that $\bar x(\tau_1) \in P_o^+$. Then $\bar x(\tau_n) \in P_o^+$ if $n$ is odd, and $\bar x(\tau_n) \in P_o^-$ if $n$ is even. Also, $\bar x$ lies in $\widetilde\Omega_-$ on $(\tau_{n+1},\tau_n)$ if $n$ is odd, and $\bar x$ lies in $\widetilde\Omega_+$ on $(\tau_{n+1},\tau_n)$ if $n$ is even. Therefore, by Proposition \ref{onOmega0}(a)--(b), $\bar x$ is a trajectory of $g$ on $(\tau_{n+1},\tau_n)$ if $n$ is odd, and $\bar x$ is a trajectory of $f$ on $(\tau_{n+1},\tau_n)$ if $n$ is even.

    We set $\Phi(t):=h(\bar x(t))$ for each $t \in [0,\bar T]$. Let $n \ge 1$ be arbitrary. Proposition \ref{derivativesY} implies that $\Phi$ is smooth on $[\tau_{n+1},\tau_n]$.
    We use the Taylor expansion of $\Phi$ on the left-hand side of $\tau_n$ with the Lagrange form of the remainder: $$\Phi(\tau_{n+1})=\Phi(\tau_n)+\Phi'(\tau_n)(\tau_{n+1}-\tau_n)+\frac{\Phi''(\xi_n)}{2}(\tau_{n+1}-\tau_n)^2,$$ where $\xi_n \in (\tau_{n+1},\tau_n)$.

    Since $\Phi(\tau_n)=\Phi(\tau_{n+1})=0$ (because $\tau_n$ and $\tau_{n+1}$ are switching times), we have 
    \begin{equation}\label{estimate1}
        \tau_{n+1}-\tau_n=\frac{-2\Phi'(\tau_n)}{\Phi''(\xi_n)}.
    \end{equation}

    From the Mean value theorem applied to $\Phi'$, we have 
    \begin{equation}\label{estimate2}
        \Phi'(\tau_{n+1})=\Phi'(\tau_n)+\Phi''(\zeta_n)(\tau_{n+1}-\tau_n),
    \end{equation} where $\zeta_n \in (\tau_{n+1},\tau_n)$. Then, replacing $(\ref{estimate1})$ in $(\ref{estimate2})$, we have
    \begin{equation}\label{estimate3}
        \Phi'(\tau_{n+1})=\Phi'(\tau_n)\left(1-2\frac{\Phi''(\zeta_n)}{\Phi''(\xi_n)}\right).
    \end{equation}

    Since $\bar x(\tau_n) \in P_o^+$ and $\bar x(\tau_{n+1}) \in P_o^-$, or vice versa, depending on whether $n$ is odd or even, $\Phi'(\tau_n)$ and $\Phi'(\tau_{n+1})$ are nonzero and have different signs. Then $(\ref{estimate3})$ can be rewritten as
    \begin{equation}\label{estimate4}
        \frac{|\Phi'(\tau_{n+1})|}{|\Phi'(\tau_n)|}-1=2\frac{\Phi''(\zeta_n)}{\Phi''(\xi_n)}-2.
    \end{equation}

    Since $n \ge 1$ was arbitrary, $(\ref{estimate4})$ holds for every $n \ge 1$. We will now apply Lemma \ref{log} to the sequence $a_n:=|\Phi'(\tau_n)|$, $n \ge 1$, to obtain a contradiction and complete the proof. For this purpose, by $(\ref{estimate4})$, we need to show that the series $\sum_{n=1}^{\infty} |p_n|$ converges, where $$p_n:=2\frac{\Phi''(\zeta_n)}{\Phi''(\xi_n)}-2 \text{ for each } n \ge 1.$$
    Since the terms of this series are nonnegative, it suffices to show that the partial sums $\sum_{n=1}^{M} |p_n|$, $M \ge 1$, are bounded.
    
    For each $n \ge 1$, the Mean Value Theorem, applied to $\Phi''$ on $[\xi_n,\zeta_n]$ (or on $[\zeta_n,\xi_n]$), implies that there exists $\theta_n \in (\tau_{n+1},\tau_n)$ such that
    \begin{equation}\label{estimate5}
        p_n=2\frac{\Phi''(\zeta_n)-\Phi''(\xi_n)}{\Phi''(\xi_n)}=2\frac{\Phi'''(\theta_n)(\zeta_n-\xi_n)}{\Phi''(\xi_n)}.
    \end{equation}

    Here, $\Phi''(\xi_n)=L^2_Y h(\bar x(\xi_n))$ and $\Phi'''(\theta_n)=L^3_Y h(\bar x(\theta_n))$, where $Y=f$ or $Y=g$, depending on whether $n$ is odd or even.
    
    We shrink the neighborhood $V$ of $x_0$ if necessary so that
    \begin{equation*}
        |L^2_f h(x)| > \frac{1}{2} |L^2_f h(x_0)|>0 \text{ and }
        |L^2_g h(x)| > \frac{1}{2} |L^2_g h(x_0)| > 0
    \end{equation*}
    for each $x \in V$.
    Then, since $\bar x(\xi_n) \in V$ for each $n \ge 1$, it follows that the sequence $\{|\Phi''(\xi_n)|\}_{n=1}^{\infty}$ is bounded from below by a positive number.
    
    Since the functions $x \mapsto L^3_f h(x)$ and $x \mapsto L^3_g h(x)$ are continuous, they are bounded on $V$ (without loss of generality, we may take $V$ to be a closed ball centered at $x_0$ with a sufficiently small radius). Then, since $\bar x(\theta_n) \in V$ for each $n \ge 1$, it follows that the sequence $\{|\Phi'''(\theta_n)|\}_{n=1}^{\infty}$ is bounded from above.
    
    Using $(\ref{estimate5})$ and combining the above facts, since $\xi_n, \zeta_n \in (\tau_{n+1},\tau_n)$ for each $n \ge 1$, it follows that there exists $L>0$, independent of $n$, such that $$|p_n| \le L|\xi_n-\zeta_n| \le L(\tau_n-\tau_{n+1})$$ for each $n \ge 1$. Consequently, for each $M \ge 1$, $$\sum_{n=1}^M |p_n|\le L \sum_{n=1}^M (\tau_n-\tau_{n+1})\le L(\tau_1-\tau_{M+1}) \le L \bar T.$$ Thus, the partial sums of $\sum_{n=1}^{\infty} |p_n|$ are bounded, and hence $\sum_{n=1}^{\infty} |p_n|$ converges. Therefore, Lemma \ref{log} implies that $a_n=|\Phi'(\tau_n)|$ converges to some positive number, which contradicts the fact that $\lim_{n \to \infty} |\Phi'(\tau_n)| = |\Phi'(t_0)|=0$. The last limit is satisfied because $\Phi$ is of class $C^1$ on $[0,\bar T]$ (see Proposition \ref{derivatives}), $\lim_{n \to \infty} \tau_n=t_0$, and $\bar x(t_0) \in S_2$. This completes the proof.
\end{proof}

\begin{lemma}\label{log}
    Let $\{a_n\}_{n=1}^{\infty}\subset(0,+\infty)$. If the series $\sum_{n=1}^{\infty}\left|\frac{a_{n+1}}{a_n}-1\right|$ converges, then $\{a_n\}_{n=1}^{\infty}$ converges to some $a>0$.
\end{lemma}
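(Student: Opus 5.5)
The plan is to pass to logarithms, so that the product structure of $a_n$ becomes a telescoping sum. We set $r_n:=\frac{a_{n+1}}{a_n}-1$, so that $a_{N+1}=a_1\prod_{n=1}^{N}(1+r_n)$ for each $N\ge 1$. Since all $a_n>0$, each factor $1+r_n=a_{n+1}/a_n$ is positive. We will show that $\sum_{n=1}^{\infty}\ln(1+r_n)$ converges to a finite real number $s$. Then $a_{N+1}=a_1\exp\bigl(\sum_{n=1}^N\ln(1+r_n)\bigr)\to a_1e^{s}=:a$, and $a>0$ because the exponential of a finite number is positive.

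Convergence of the log series comes from comparison with $\sum|r_n|$. Since $\sum_{n}|r_n|$ converges, $r_n\to 0$. Hence there is $n_0$ with $|r_n|\le \frac12$ for all $n\ge n_0$. For $|r|\le\frac12$ we use the elementary bound $|\ln(1+r)|\le 2|r|$. This bound follows from the mean value theorem: $\ln(1+r)=r/(1+\eta)$ with $\eta$ between $0$ and $r$, so $|1+\eta|\ge\frac12$. Therefore $\sum_{n\ge n_0}|\ln(1+r_n)|\le 2\sum_{n\ge n_0}|r_n|<\infty$. The series $\sum\ln(1+r_n)$ thus converges absolutely; the finitely many terms with $n<n_0$ are finite real numbers because $1+r_n>0$.

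The only point needing care is positivity of the limit. This is exactly why we work with logarithms rather than bounding the product directly: the finite sum of the logs keeps $a$ away from $0$. No step presents a genuine obstacle. The argument is the standard fact that an infinite product $\prod(1+r_n)$ with $\sum|r_n|<\infty$ and all factors positive converges to a nonzero limit.
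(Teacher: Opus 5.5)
Your proof is correct and follows essentially the same route as the paper: you telescope $\ln a_{N+1}=\ln a_1+\sum_{n=1}^{N}\ln(1+r_n)$, bound $|\ln(1+r_n)|\le 2|r_n|$ for large $n$, and exponentiate the finite limit to get $a>0$. The only difference is how the bound is justified: the paper uses $\lim_{y\to 0}\ln(1+y)/y=1$, while you use the mean value theorem with the explicit threshold $|r_n|\le\frac12$.
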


\begin{proof}
    For each $m\ge1$, since $a_{m+1}=a_1\prod_{n=1}^m\frac{a_{n+1}}{a_n}$, we have
    \begin{equation}\label{telescope}
        \ln a_{m+1}=\ln a_1+\sum_{n=1}^m\ln\frac{a_{n+1}}{a_n}.
    \end{equation}

    Since the series $\sum_{n=1}^{\infty}\left|\frac{a_{n+1}}{a_n}-1\right|$ is convergent, we have $\lim_{n\to\infty}\frac{a_{n+1}}{a_n}=1$. Moreover, since $\lim_{y\to0}\frac{\ln(1+y)}{y}=1$, there exists $N \ge 1$ such that, for every $n \ge N$, $$\left|\ln\frac{a_{n+1}}{a_n}\right|=\left|\ln\left(1+\frac{a_{n+1}}{a_n}-1\right)\right|
    \le 2\left|\frac{a_{n+1}}{a_n}-1\right|.$$
    Therefore, $\sum_{n=N}^{\infty}\left|\ln\frac{a_{n+1}}{a_n}\right|$ is convergent, and hence $\sum_{n=1}^{\infty}\left|\ln\frac{a_{n+1}}{a_n}\right|$ is convergent.
    Thus, the series $\sum_{n=1}^{\infty}\ln\frac{a_{n+1}}{a_n}$ is absolutely convergent.

    Then, from $(\ref{telescope})$, it follows that $\lim_{m\to\infty}\ln a_{m+1}=L$ for some $L \in \R$. Consequently, $\lim_{m\to\infty}a_m=\lim_{m\to\infty}e^{\ln a_m}=e^L>0$.
\end{proof}

\subsection{The case $k_0 > 2$}

We consider the following subsets of the switching sets $P_o^+$ and $P_o^-$, respectively:

$$S_+:=\{x \in P_o^+ : L^k h(x) > 0 \text{ for each } k \in \{1,\dots,k_0-1\}\}$$
and
$$S_-:=\{x \in P_o^- : L^k h(x) < 0 \text{ for each } k \in \{1,\dots,k_0-1\}\}.$$

\begin{proposition}
    $S_2 \subset \overline{S_+}$ and $S_2 \subset \overline{S_-}$
\end{proposition}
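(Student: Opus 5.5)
The plan is to produce points of $S_-$ (and symmetrically of $S_+$) arbitrarily close to a given $x_0\in S_2$ by a two-arc construction: first follow $g$ forward from $x_0$ for a short time $t$, then follow $f$ forward for a time of order $t$, and show that for a suitable $\lambda^*>0$ the $f$-arc hits $\Omega_0$ near time $\lambda^* t$ at a point where $L^kh<0$ for all $k\in\{1,\dots,k_0-1\}$. Such a point belongs to $P_o^-$, since $k_0>2$ gives $m(x)=1$ and $L^1h<0$. Interchanging the roles of $f$ and $g$ (first $f$, then $g$) gives points of $S_+$. Letting $t\to0$ gives $x_0\in\overline{S_-}$ and $x_0\in\overline{S_+}$.

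\textbf{Step 1 (the $g$-arc).} Set $c:=L^{k_0}_gh(x_0)>0$ and $d:=L^{k_0}_fh(x_0)<0$. Let $y$ solve $\dot y=g(y)$, $y(0)=x_0$. By Proposition \ref{derivativesY}, the function $\varphi(t)=h(y(t))$ is smooth with $\varphi^{(k)}(t)=L^kh(y(t))$ for $k<k_0$, and $\varphi^{(k)}(0)=0$ for $k<k_0$ because $x_0\in S_0$. Taylor's formula together with continuity of $L^{k_0}_gh$ gives
\[
L^jh(y(t))=\frac{c\,t^{k_0-j}}{(k_0-j)!}\bigl(1+o(1)\bigr),\qquad j=0,\dots,k_0-1,\ t\downarrow0 .
\]

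\textbf{Step 2 (the $f$-arc and the scaling limit).} Let $w$ solve $\dot w=f(w)$, $w(0)=z_t:=y(t)$, and put $\psi(s)=h(w(s))$. Again $\psi^{(k)}(s)=L^kh(w(s))$ for $k<k_0$ and $\psi^{(k_0)}(s)=L^{k_0}_fh(w(s))=d+o(1)$ uniformly for $s\in[0,\Lambda t]$ with $\Lambda$ fixed. Expand $\psi^{(k)}$ at $s=0$ with Lagrange remainder of order $k_0-k$, substitute $s=\lambda t$ and the asymptotics from Step 1, and use the binomial identity $\sum_{j=k}^{k_0}\frac{\lambda^{j-k}}{(k_0-j)!(j-k)!}=\frac{(1+\lambda)^{m}}{m!}$ with $m=k_0-k$. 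This yields, uniformly for $\lambda\in[0,\Lambda]$,
\[
L^kh(w(\lambda t))=\frac{t^{m}}{m!}\Bigl(P_m(\lambda)+o(1)\Bigr),\qquad P_m(\lambda):=c(1+\lambda)^m-(c-d)\lambda^m ,
\]
for each $k\in\{0,\dots,k_0-1\}$.

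\textbf{Step 3 (analysis of the model polynomials).} For $\lambda>0$ write $P_m(\lambda)=\lambda^m\bigl(c\,r^m-(c-d)\bigr)$ with $r=(1+\lambda)/\lambda$. The map $\lambda\mapsto r$ decreases strictly from $+\infty$ to $1$, and $(c-d)/c>1$. Hence there is a unique $\lambda^*>0$ with $r(\lambda^*)^{k_0}=(c-d)/c$, and $P_{k_0}$ changes sign from positive to negative at $\lambda^*$. For $1\le m<k_0$ we have $r^m<r^{k_0}$ since $r>1$, so $P_m(\lambda^*)<0$. By continuity, there exists $\varepsilon>0$ such that:
\begin{itemize}
\item $P_m<0$ on $[\lambda^*-\varepsilon,\lambda^*+\varepsilon]$ for all $m\in\{1,\dots,k_0-1\}$;
\item $P_{k_0}(\lambda^*-\varepsilon)>0>P_{k_0}(\lambda^*+\varepsilon)$.
\end{itemize}
This sign computation is what I expect to be the crux. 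The natural first attempt, a $g$-arc followed by a \emph{backward} $f$-arc, fails precisely here: there the even-index polynomials vanish before $P_{k_0}$ does. It is therefore essential to go forward along both fields.

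\textbf{Step 4 (conclusion).} Take $\Lambda=\lambda^*+\varepsilon$. By the uniform $o(1)$ in Step 2, for all sufficiently small $t>0$ the following hold:
\begin{itemize}
\item $h(w(\lambda t))$ is positive at $\lambda=\lambda^*-\varepsilon$ and negative at $\lambda=\lambda^*+\varepsilon$;
\item $L^kh(w(\lambda t))<0$ on the whole interval $[\lambda^*-\varepsilon,\lambda^*+\varepsilon]$ for every $k\in\{1,\dots,k_0-1\}$.
\end{itemize}
The intermediate value theorem gives $\lambda_t$ in this interval with $h(w(\lambda_tt))=0$, so $x_t:=w(\lambda_tt)\in S_-$. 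Finally, $|x_t-x_0|\le|z_t-x_0|+\Lambda t\sup|f|=O(t)\to0$. The only remaining care is that all trajectories stay in a fixed compact neighborhood of $x_0$ on which $L^{k_0}_fh$ and $L^{k_0}_gh$ are uniformly continuous, which makes the $o(1)$ terms uniform.
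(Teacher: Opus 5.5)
Your proof is correct and complete. The paper does not actually prove this proposition (its proof environment only says ``Will be added''), so there is no argument in the paper to compare it with, and your two-arc construction can stand as the proof.

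I checked the two places where it could break.

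First, with $s=\lambda t$ and $m=k_0-k$, the Taylor terms combine to $\frac{t^m}{m!}\bigl(c((1+\lambda)^m-\lambda^m)+d\lambda^m+o(1)\bigr)$, uniformly on $[0,\Lambda]$. This is your $P_m$. Since $r>1$, the root $\lambda^*$ of $P_{k_0}$ lies beyond the roots of all $P_m$ with $m<k_0$, which is exactly what makes $L^1h,\dots,L^{k_0-1}h$ negative on the window.

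Second, in the reversed construction ($f$ first, then $g$) the model polynomials are $d(1+\lambda)^m+(c-d)\lambda^m=-\bigl(|d|(1+\lambda)^m-(|d|+c)\lambda^m\bigr)$. This is the same analysis with $(c,d)$ replaced by $(|d|,-c)$ and all signs reversed, so you do land in $S_+$.

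Only trivialities remain: take $\varepsilon<\lambda^*$ so the window lies in $(0,\Lambda]$, and note that $k_0\ge 2$ already gives $m(x_t)=1$.

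For comparison, there is a shorter route through the relative-degree connection mentioned in the introduction. Put $p:=g$, $q:=f-g$, and $\mathrm{ad}_pX:=[p,X]$.
\begin{itemize}
\item By \ref{A1}, $L_qL^kh=L^{k+1}_fh-L^{k+1}_gh\equiv 0$ for $0\le k\le k_0-2$, while $L_qL^{k_0-1}h(x_0)=d-c\neq 0$.
\item The standard relative-degree argument (see the beginning of Chapter 4 of \cite{Isidori}), based on $L_{[p,X]}=L_pL_X-L_XL_p$, shows that $DL^ih(x_0)\,\mathrm{ad}_p^jq(x_0)$ vanishes for $i+j\le k_0-2$ and equals $(-1)^j(d-c)$ for $i+j=k_0-1$.
\item Hence $Dh(x_0),DL^1h(x_0),\dots,DL^{k_0-1}h(x_0)$ are linearly independent, so $(h,L^1h,\dots,L^{k_0-1}h)$ is a submersion, and hence open, near $x_0$.
\item Preimages of $(0,\mp\eta,\dots,\mp\eta)$ then give points of $S_\mp$ arbitrarily close to $x_0$.
\end{itemize}

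The two approaches buy different things. The submersion argument needs only $L^{k_0}_fh(x_0)\neq L^{k_0}_gh(x_0)$, and it also shows that $S_0$ is locally a submanifold of codimension $k_0$. Your argument is entirely elementary and tailored to the sign pattern $d<0<c$ of $S_2$. It also yields dynamical information the submersion argument does not: $P_{k_0}>0$ on $[0,\lambda^*-\varepsilon]$ and $L^1h<0$ on the window, so $h(w(\cdot))$ stays positive until its unique zero at $\lambda_t t$. The $f$-arc from $z_t$ to $x_t$ is therefore a genuine trajectory of $F$ in $\Omega_+$ ending in $S_-$. That is exactly the kind of arc Corollary \ref{extension} and Proposition \ref{invariantSwitchings} work with.
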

\begin{proof}
    Will be added.
\end{proof}

\begin{corollary}\label{extension}
If $x_0 \in S_+$ ($x_0 \in S_-$, respectively), then there exist $\bar T \in(0,+\infty]$ and a unique trajectory $\bar x$ of $F$ starting from $x_0$ and defined on $[0,\bar T)$ such that $\bar x(t) \in \widetilde\Omega_+$ ($\bar x(t) \in \widetilde\Omega_-$, respectively) for each $t \in (0,\bar T)$. Moreover, if $\bar T < +\infty$, then $\bar x$ can be extended to $\bar T$ with $\bar x(\bar T) \in S_-$ ($\bar x(\bar T) \in S_+$, respectively) [as long as the trajectory stays in a sufficiently small neighborhood of $x_0]$.
\end{corollary}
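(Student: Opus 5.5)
The plan is to build $\bar x$ as the integral curve of $f$ through $x_0$, stop it at the first time it leaves $\widetilde\Omega_+$, and identify the exit point by counting sign changes in the derivatives of $h(\bar x(t))$. We treat $x_0\in S_+$; the case $S_-$ follows by interchanging $f,g$, $\Omega_+,\Omega_-$ and the signs. Fix a closed ball $V$ around $x_0$ (the ``sufficiently small neighborhood'' of the statement) on which $L^{k_0}_f h$ has constant sign.

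\emph{Caveat.} This choice of $V$ needs $L^{k_0}_f h(x_0)\ne 0$, which \ref{A2} does not give at points of $S_+$. In the intended application $x_0$ is close to $S_2$, where $L^{k_0}_f h<0$, and I would add this as an explicit hypothesis.

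\textbf{Construction and uniqueness.} Let $y$ be the solution of $\dot y=f(y)$, $y(0)=x_0$, for as long as it stays in $V$. Set
$$\bar T:=\sup\{t>0: y(s)\in\widetilde\Omega_+ \text{ for each } s\in(0,t)\},$$
which is positive because $x_0\in P^+_o$ and Proposition \ref{onOmega0}(c) applies. Put $\bar x:=y$ on $[0,\bar T)$.
\begin{itemize}
\item On $(0,\bar T)$ we have $f(\bar x)\in F(\bar x)$, since $F=\{f\}$ on $\Omega_+$ and $f\in F$ on $P^+_e\subset\Omega_0$. Hence $\bar x$ is a trajectory of $F$.
\item For uniqueness, let $\widetilde x$ be any trajectory of $F$ from $x_0$ that stays in $\widetilde\Omega_+$ on $(0,\bar T)$. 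By Remark \ref{one-sided} it is a trajectory of $f$ there, with one-sided derivative $f(x_0)$ at $0$. ODE uniqueness then gives $\widetilde x=\bar x$.
\item If $\bar T<\infty$ and $y$ stays in $V$ up to $\bar T$, then $y$ extends smoothly to $\bar T$. Openness of $I_+$ (Remark \ref{one-sided}) gives $\bar x(\bar T)\notin\widetilde\Omega_+$, and continuity gives $\bar x(\bar T)\in\Omega_0$.
\end{itemize}

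\textbf{Identifying $\bar x(\bar T)$.} Let $\Phi(t):=h(\bar x(t))$, which is smooth on $[0,\bar T]$ by Proposition \ref{derivativesY}. We know:
\begin{itemize}
\item $\Phi(0)=0$, and $\Phi^{(j)}(0)=L^jh(x_0)>0$ for $1\le j\le k_0-1$;
\item $\Phi>0$ on $(0,\bar T)$ and $\Phi(\bar T)=0$;
\item $\Phi^{(k_0)}=L^{k_0}_f h\circ\bar x$ has constant sign on $[0,\bar T]$.
\end{itemize}
If that sign is nonnegative, every $\Phi^{(j)}$ with $j\ge 1$ is nondecreasing and hence positive, so $\Phi>0$ on $(0,\bar T]$. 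This contradicts $\Phi(\bar T)=0$, so the exit can only happen through $\partial V$, which is the excluded case of the bracketed clause. So assume $\Phi^{(k_0)}<0$.

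We then use a Budan--Fourier count. For $t$ just to the right of $0$, the sequence $(\Phi,\Phi',\dots,\Phi^{(k_0)})$ has signs $(+,\dots,+,-)$, that is, exactly one sign change. The claim is that the number of zeros of $\Phi$ in $(0,\bar T]$, counted with multiplicity, is at most $V(0^+)-V(\bar T)$. Here $V(t)$ is the number of sign changes of this sequence, with the convention that a zero entry at $\bar T$ takes the sign obtained from the left.

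The zero at $\bar T$ has some multiplicity $m\ge1$. The count forces $m=1$ and $V(\bar T)=0$. Since $\Phi^{(k_0)}(\bar T)<0$, this means $\Phi^{(j)}(\bar T)<0$ for each $j\in\{1,\dots,k_0-1\}$. As $\Phi^{(j)}(\bar T)=L^jh(\bar x(\bar T))$, we get $m(\bar x(\bar T))=1$ with $L^1h<0$, so $\bar x(\bar T)\in P^-_o$. The full sign condition gives $\bar x(\bar T)\in S_-$.

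\textbf{Main obstacle.} The delicate step is the Budan--Fourier bound for a $C^{k_0}$ function whose $k_0$-th derivative has constant sign. I would prove it directly by downward induction with Rolle's theorem, using only the qualitative information above rather than citing the polynomial theorem.

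At level $k_0$, $\Phi^{(k_0-1)}$ is strictly decreasing and starts positive, so it has at most one zero on $[0,\bar T]$. Now suppose $\Phi^{(j)}(\bar T)\ge 0$ for some $1\le j\le k_0-1$, and take $j$ maximal. The values $\Phi^{(j)}(0)>0$ and $\Phi^{(j)}(\bar T)\ge 0$, together with the zeros forced by Rolle from $\Phi(0)=\Phi(\bar T)=0$, would force $\Phi^{(j+1)}$ to have too many zeros or to end with the wrong sign. Propagating this down to $\Phi$ contradicts $\Phi>0$ on $(0,\bar T)$. The bookkeeping of multiplicities and of boundary zeros in this induction is where I expect the real work to lie.
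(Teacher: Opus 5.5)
Your construction of $\bar x$ as the $f$-trajectory is fine, and so are the uniqueness argument via Remark \ref{one-sided} and your reading of the bracketed clause (a neighborhood on which $L^{k_0}_f h<0$, such as the $V_{x_0}$ of Proposition \ref{invariantSwitchings}). The gap is in the only nontrivial step, $\bar x(\bar T)\in S_-$. You reduce it to a Budan--Fourier count that you do not prove, and as stated that count does not give what you use.

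\begin{itemize}
\item Suppose ``a zero entry at $\bar T$ takes the sign obtained from the left'' means the left limit in $t$. Then the inequality is false already for $k_0=2$ and $\Phi(t)=t(1-t)$ on $[0,1]$: there is one zero in $(0,1]$, but $V(0^+)=V(1^-)=1$.
\item With the standard convention (zero entries deleted), $V(\bar T)=0$ only says that the \emph{nonzero} entries among $\Phi'(\bar T),\dots,\Phi^{(k_0)}(\bar T)$ are negative. To exclude $\Phi^{(j)}(\bar T)=0$ for some $2\le j\le k_0-1$ you need the sharper form of Fourier's theorem: a block of vanishing derivatives between two entries of equal sign costs a positive even number of sign changes. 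Your Rolle sketch (``too many zeros or the wrong sign'') does not address this.
\item The sketch ends by contradicting $\Phi>0$ on $(0,\bar T)$, which you have not shown. $\widetilde\Omega_+$ contains $P_e^+\subset\Omega_0$, so a priori $\Phi$ may vanish at interior times.
\end{itemize}

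The missing step has a short direct proof that uses only $\Phi(\bar T)=0$; it is the argument in the first half of the proof of Proposition \ref{invariantSwitchings}. Prove by downward induction on $k\in\{1,\dots,k_0-1\}$ that there is $t_k\in(0,\bar T)$ with $\Phi^{(k)}>0$ on $[0,t_k)$ and $\Phi^{(k)}<0$ on $(t_k,\bar T]$.

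\textbf{Base case $k=k_0-1$.} $\Phi^{(k_0-1)}$ is strictly decreasing. If $\Phi^{(k_0-1)}(\bar T)\ge 0$, it is positive on $[0,\bar T)$. Taylor's formula at $0$ with Lagrange remainder of order $k_0-1$, together with $\Phi(0)=0$ and $\Phi^{(j)}(0)>0$, then gives $\Phi(\bar T)>0$, a contradiction.

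\textbf{Inductive step.} $\Phi^{(k-1)}$ is increasing on $[0,t_k]$ and strictly decreasing on $[t_k,\bar T]$, with $\Phi^{(k-1)}(0)>0$. So $\Phi^{(k-1)}(\bar T)\ge 0$ would again make it positive on $[0,\bar T)$, and Taylor's formula of order $k-1$ gives the same contradiction.

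This yields $L^k h(\bar x(\bar T))=\Phi^{(k)}(\bar T)<0$ strictly for every $k\in\{1,\dots,k_0-1\}$, i.e.\ $\bar x(\bar T)\in S_-$, with no multiplicity bookkeeping. The case $k=1$ also shows that $\Phi$ increases and then decreases. Hence $\Phi>0$ on $(0,\bar T)$ (no tangential touches of $P_e^+$) comes out as a consequence rather than being needed as an input.
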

\begin{proof}
    Will be added.
\end{proof}

\begin{proposition}\label{invariantSwitchings}
    Let $x_0 \in S_2$. Then there exist $K>0$ and a neighborhood $V_{x_0}$ of $x_0$ such that if $\bar x$ is a trajectory of $F$ and $\tau_1$ and $\tau_2$ are two consecutive switching times of $\bar x$ satisfying $\tau_2>\tau_1$ and $\bar x(t) \in V_{x_0}$ for all $t \in [\tau_1,\tau_2]$, then:
    \begin{enumerate}[label=(\alph*)]
        \item if $\bar x(\tau_1) \in S_+$, then $\bar x(\tau_2) \in S_-$;
        \item if $\bar x(\tau_1) \in S_-$, then $\bar x(\tau_2) \in S_+$.
    \end{enumerate}
    Moreover, in both cases,
    $$|L^{k_0-1}h(\bar x(\tau_2))|>|L^{k_0-1}h(\bar x(\tau_1))|+K(\tau_2-\tau_1).$$
\end{proposition}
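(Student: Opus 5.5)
By symmetry (interchanging $f$ with $g$ and $\Omega_+$ with $\Omega_-$, and replacing $h$ by $-h$), it suffices to treat case (a). The idea is to follow the trajectory from a point of $S_+$ along the flow of $f$ and track the function $\Phi(t):=h(\bar x(t))$ through its Taylor structure up to order $k_0$. Since $x_0\in S_2$, we have $L^{k_0}_f h(x_0)<0<L^{k_0}_g h(x_0)$. I would choose $V_{x_0}$ to be a small closed ball on which
$$L^{k_0}_f h \le -2K_f<0<2K_g\le L^{k_0}_g h,$$
and on which all Lie derivatives up to order $k_0+1$ of $h$ along $f$ and $g$ are bounded. The intended constant is $K:=\min\{K_f,K_g\}$, possibly halved.

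\textbf{Step 1 (the signs along an $f$-arc).} Suppose $\bar x(\tau_1)\in S_+$. By Proposition \ref{onOmega0}(c), $\bar x$ enters $\Omega_+$ immediately after $\tau_1$. Since $\tau_2$ is the next switching time, $\bar x$ stays in $\widetilde\Omega_+$ on $(\tau_1,\tau_2)$, so by Remark \ref{one-sided} it is an $f$-trajectory on $[\tau_1,\tau_2]$. Hence $\Phi$ is smooth there, with $\Phi^{(k)}=L^k h(\bar x)$ for $k<k_0$ and $\Phi^{(k_0)}=L^{k_0}_f h(\bar x)\le -2K_f$. At $\tau_1$ we have $\Phi(\tau_1)=0$ and $\Phi^{(k)}(\tau_1)>0$ for $1\le k\le k_0-1$.

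Because $\Phi^{(k_0-1)}$ is strictly decreasing, each lower derivative has a simple cascade structure: $\Phi^{(k_0-1)}$ is positive then negative with at most one sign change; consequently $\Phi^{(k_0-2)}$ increases then decreases, so it has at most one zero after being positive; and so on down to $\Phi$. In particular $\Phi$ first increases, then decreases, and vanishes at most once after $\tau_1$, namely at $\tau_2$.

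At $\tau_2$ we need $\Phi^{(k)}(\tau_2)<0$ for all $1\le k\le k_0-1$, which gives $\bar x(\tau_2)\in S_-$. By the cascade, at the first zero of $\Phi$ one has $\Phi'<0$, but the higher derivatives might still be positive. This is where the evenness of $k_0$ and the one-sided Taylor identities enter. I would argue inductively from the top: let $\sigma_j$ denote the zero of $\Phi^{(j)}$ after $\tau_1$. The cascade gives $\sigma_{k_0-1}<\sigma_{k_0-2}<\dots<\sigma_1<\sigma_0=\tau_2$, provided each of these zeros exists. Existence of the zeros is not an issue, because $\Phi^{(k_0)}\le -2K_f$ forces every $\Phi^{(j)}$ to eventually become negative, so the zeros occur before the trajectory can leave $V_{x_0}$. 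The strict ordering $\sigma_{j+1}<\sigma_j$ holds because $\Phi^{(j)}$ can only start decreasing after $\Phi^{(j+1)}$ becomes negative. On $(\sigma_1,\tau_2]$ all derivatives of order $1,\dots,k_0-1$ are then negative, which places $\bar x(\tau_2)$ in $S_-$. Note also that no other switching can occur strictly between $\tau_1$ and $\tau_2$, because $\Phi>0$ there.

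\textbf{Step 2 (the growth estimate).} Since $\Phi^{(k_0-1)}=L^{k_0-1}h(\bar x)$ and $\Phi^{(k_0)}\le -2K_f$ on $[\tau_1,\tau_2]$,
$$\Phi^{(k_0-1)}(\tau_2)\le \Phi^{(k_0-1)}(\tau_1)-2K_f(\tau_2-\tau_1).$$
Now $\Phi^{(k_0-1)}(\tau_1)>0>\Phi^{(k_0-1)}(\tau_2)$, so this only yields $|\Phi^{(k_0-1)}(\tau_2)|\ge 2K_f(\tau_2-\tau_1)-|\Phi^{(k_0-1)}(\tau_1)|$, which is not what is claimed. The missing comparison is between the time spent with $\Phi^{(k_0-1)}>0$ and the time spent with it negative.

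I would handle this through a symmetry/Taylor argument. Writing $s=\tau_2-\tau_1$ and $c=\Phi^{(k_0-1)}(\tau_1)$, the requirement $\Phi(\tau_2)=0$ with all lower derivatives positive at $\tau_1$ forces $s$ to be much larger than the time $c/|\Phi^{(k_0)}|$ at which $\Phi^{(k_0-1)}$ vanishes. Heuristically, for a constant $\Phi^{(k_0)}=-a$, the polynomial $\sum_k \Phi^{(k)}(\tau_1)t^k/k!-a t^{k_0}/k_0!$ must dominate its positive lower-order part, which gives $as\ge c\cdot\frac{k_0}{k_0-1}\cdot(\dots)>2c$ roughly, hence $|\Phi^{(k_0-1)}(\tau_2)|=as-c>c+\tfrac{a}{2}s$ up to perturbation. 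Concretely, I would compare $\Phi$ with the constant-coefficient polynomial, using the bound on $L^{k_0+1}_f h$ to control the error by $O(s^{k_0+1})$ in the relevant Taylor expansions. Then I would show that the inequality $\Phi(\tau_2)=0$ forces $as-c\ge c+\kappa as$ with $\kappa>0$ universal, depending only on $k_0$. Making this comparison rigorous, uniformly and with $V_{x_0}$ small, is the main obstacle; the first thing I would try is an explicit Taylor/integration-by-parts identity for $\Phi(\tau_2)$ in terms of the values $\Phi^{(k)}(\tau_1)$ and $\int(\tau_2-r)^{k_0-1}\Phi^{(k_0)}(r)\,dr$.
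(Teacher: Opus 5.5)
\textbf{Step 1.} This is, in substance, the paper's argument. The paper inducts downward from $\Phi^{(k_0-1)}$: if $\Phi^{(j)}(\tau_2)\ge 0$, then unimodality gives $\Phi^{(j)}>0$ on $[\tau_1,\tau_2)$, and a Lagrange--Taylor expansion at $\tau_1$ then forces $\Phi(\tau_2)>0$, a contradiction.

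Your justification for the existence of the zeros $\sigma_j$ is off, though. Leaving $V_{x_0}$ is irrelevant; what is needed is $\sigma_j<\tau_2$, and that comes from $\Phi(\tau_2)=0$ exactly as above. Your ordering remark, read upward from $\sigma_0=\tau_2$, does supply it. Evenness of $k_0$ plays no role in this step.

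\textbf{The gap is Step 2.} This is the substance of the ``Moreover'' part, and you leave it as a heuristic. The missing idea is simple and needs no bound on $L^{k_0+1}_f h$.

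In the order-$k_0$ Lagrange expansion of $0=\Phi(\tau_2)$ at $\tau_1$, discard the terms $\Phi^{(k)}(\tau_1)s^k/k!$ for $1\le k\le k_0-2$, since they are positive. This leaves $c\,s^{k_0-1}/(k_0-1)!+\Phi^{(k_0)}(\tau_*)s^{k_0}/k_0!<0$, i.e.\ $s>k_0c/|\Phi^{(k_0)}(\tau_*)|$. That is a factor $k_0$, not the vague ``$>2c$'' of your heuristic.

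Combine this with the mean value theorem $\Phi^{(k_0-1)}(\tau_2)=c+\Phi^{(k_0)}(\tau_{**})s$. Now $\Phi^{(k_0)}$ is compared at two different points, so an absolute bound $L^{k_0}_f h\le -2K_f$ is not enough. The paper instead chooses $V_{x_0}$ so that $L^{k_0}_f h(x_1)/L^{k_0}_f h(x_2)>1-\frac{1}{2k_0}$ for all $x_1,x_2\in V_{x_0}$. This yields $|\Phi^{(k_0-1)}(\tau_2)|>(k_0-\frac32)c$.

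Then write $s=(c+|\Phi^{(k_0-1)}(\tau_2)|)/|\Phi^{(k_0)}(\tau_{**})|$ and absorb $c$ using the previous inequality. This gives the claim with $K=\frac{2k_0-5}{2k_0}|L^{k_0}_f h(x_0)|$, and similarly with $g$ for case (b).

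Observe that $K>0$ only because $k_0>2$. For $k_0=2$ and constant $\Phi''=-a$, one has $s=2c/a$ and $|\Phi'(\tau_2)|=c$, so there is no gain at all. Your choice $K=\min\{K_f,K_g\}$ does not depend on $k_0$, and your heuristic never uses $k_0>2$. Both show that the mechanism producing the growth has not yet been identified.
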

\begin{proof}
    We consider case (a). Since $x_0 \in S_2$, we have $L^{k_0}_f h(x_0) < 0$. Thus, there exists a neighborhood $V_{x_0}$ of $x_0$ such that for each $x_1 \in V_{x_0}$ and $x_2 \in V_{x_0}$ the following inequality is satisfied:
    \begin{equation}\label{inequality2}
        \frac{L^{k_0}_f h(x_1)}{L^{k_0}_f h(x_2)} > 1-\frac{1}{2k_0}.
    \end{equation}
    In particular, it follows that $L^{k_0}_f h(x)<0$ for each $x \in V_{x_0}$.
 
    Let $\bar x$ be a trajectory of $F$, and let $\tau_1$ and $\tau_2$ be two consecutive switching times of $\bar x$ such that $\tau_2 > \tau_1$ and $\bar x(t) \in V_{x_0}$ for all $t \in [\tau_1,\tau_2]$. We set $\varphi(t):=h(\bar x(t))$ for each $t \in [\tau_1,\tau_2]$.

    Suppose that $\bar x(\tau_1) \in S_+$. Then $\bar x(t) \in \widetilde\Omega_+$ for each $t \in (\tau_1,\tau_2)$, and thus $\bar x$ satisfies $\dot{\bar x}(t) = f(\bar x(t))$ for each $t \in [\tau_1,\tau_2]$. Thus, $\varphi$ is of class $C^{\infty}$ on $[\tau_1,\tau_2]$, and $\varphi^{(k)}(t)=L^k_f h(\bar x(t))$ for each $k \ge 1$ and each $t \in [\tau_1,\tau_2]$ (see Remark \ref{derivativesY}).

    Since $\tau_1$ and $\tau_2$ are switching times, we have $\varphi(\tau_1)=\varphi(\tau_2)=0$. Also, since $\bar x(\tau_1) \in S_+$ and $\bar x$ lies in $V_{x_0}$ on $[\tau_1,\tau_2]$, we have $\varphi^{(k)}(\tau_1)>0$ for each $k \in \{1,\dots,k_0-1\}$ and $\varphi^{(k_0)}(t)<0$ for each $t \in [\tau_1,\tau_2]$.

    We will first prove that $\bar x(\tau_2) \in S_-$, that is, $\varphi^{(k)}(\tau_2)<0$ for each $k \in \{1,\dots,k_0-1\}$. More precisely, by induction, we will show that for each $k \in \{1,\dots,k_0-1\}$, $\varphi^{(k)}$ changes its sign exactly once on $(\tau_1,\tau_2)$, that is, there exists $t_k \in (\tau_1,\tau_2)$ such that $\varphi^{(k)}(t)>0$ for each $t \in [\tau_1,t_k)$, $\varphi^{(k)}(t_k)=0$, and $\varphi^{(k)}(t)<0$ for each $t \in (t_k,\tau_2]$.

    Since $\varphi^{(k_0)}(t) < 0$ for each $t \in [\tau_1,\tau_2]$, $\varphi^{(k_0-1)}$ is strictly decreasing on $[\tau_1,\tau_2]$. Suppose that $\varphi^{(k_0-1)}(\tau_2) \ge 0$. Then $\varphi^{(k_0-1)}(t) > 0$ for each $t \in [\tau_1,\tau_2)$. Using the Taylor expansion of $\varphi$ on the right-hand side of $\tau_1$ with the Lagrange form of the remainder, we have
    $$\varphi(\tau_2)=\sum_{k=0}^{k_0-2}\frac{\varphi^{(k)}(\tau_1)}{k!}(\tau_2-\tau_1)^k+\frac{\varphi^{(k_0-1)}(\xi_{k_0-1})}{(k_0-1)!}(\tau_2-\tau_1)^{k_0-1},$$ where $\xi_{k_0-1} \in (\tau_1,\tau_2)$. Since $\varphi^{(k)}(\tau_1) \ge 0$ for each $k \in \{0,\dots,k_0-2\}$ and $\varphi^{(k_0-1)}(t) > 0$ for each $t \in (\tau_1,\tau_2)$, it follows that $\varphi(\tau_2)>0$, which is a contradiction to $\tau_2$ being a switching time. Thus, $\varphi^{(k_0-1)}(\tau_2) < 0$ and there exists $t_{k_0-1} \in (\tau_1,\tau_2)$ such that $\varphi^{(k_0-1)}(t)>0$ for each $t \in [\tau_1,t_{k_0-1})$, $\varphi^{(k_0-1)}(t_{k_0-1})=0$, and $\varphi^{(k_0-1)}(t)<0$ for each $t \in (t_{k_0-1},\tau_2]$.

    Suppose that for some $k \in \{2,\dots,k_0-1\}$ there exists $t_k \in (\tau_1,\tau_2)$ such that $\varphi^{(k)}(t)>0$ for each $t \in [\tau_1,t_k)$, $\varphi^{(k)}(t_k)=0$, and $\varphi^{(k)}(t)<0$ for each $t \in (t_k,\tau_2]$.

    Then $\varphi^{(k-1)}$ is increasing on $(\tau_1,t_k)$ and decreasing on $(t_k,\tau_2)$. Suppose that $\varphi^{(k-1)}(\tau_2) \ge 0$. Then $\varphi^{(k-1)}(t) > 0$ for each $t \in [\tau_1,\tau_2)$. As in the base case, we obtain $\varphi(\tau_2)>0$, which is a contradiction to $\tau_2$ being a switching time. Thus, $\varphi^{(k-1)}(\tau_2) < 0$ and there exists $t_{k-1} \in (\tau_1,\tau_2)$ such that $\varphi^{(k-1)}(t)>0$ for each $t \in [\tau_1,t_{k-1})$, $\varphi^{(k-1)}(t_{k-1})=0$, and $\varphi^{(k-1)}(t)<0$ for each $t \in (t_{k-1},\tau_2]$. This completes the induction. Consequently, $\varphi^{(k)}(\tau_2)<0$ for each $k \in \{1,\dots,k_0-1\}$, that is, $\bar x(\tau_2) \in S_-$.

    We will next prove that $|\varphi^{(k_0-1)}(\tau_2)| > (k_0-3/2)|\varphi^{(k_0-1)}(\tau_1)|$. We use the Taylor expansion of $\varphi$ on the right-hand side of $\tau_1$ with the Lagrange form of the remainder: $$\varphi(\tau_2)=\sum_{k=0}^{k_0-1}\frac{\varphi^{(k)}(\tau_1)}{k!}(\tau_2-\tau_1)^k+\frac{\varphi^{(k_0)}(\tau_*)}{k_0!}(\tau_2-\tau_1)^{k_0},$$ where $\tau_* \in (\tau_1,\tau_2)$.
    Since $\varphi(\tau_1)=\varphi(\tau_2)=0$ and $\varphi^{(k)}(\tau_1)>0$ for each $k \in \{1,\dots,k_0-2\}$, we have $$\frac{\varphi^{(k_0-1)}(\tau_1)}{(k_0-1)!}(\tau_2-\tau_1)^{k_0-1}+\frac{\varphi^{(k_0)}(\tau_*)}{k_0!}(\tau_2-\tau_1)^{k_0}<0.$$
    Since $\varphi^{(k_0)}(\tau_*)<0$, we have
    \begin{equation}\label{inequality1}
        \tau_2-\tau_1 > k_0 \frac{\varphi^{(k_0-1)}(\tau_1)}{-\varphi^{(k_0)}(\tau_*)}.
    \end{equation}

    From the Mean value theorem applied to $\varphi^{(k_0-1)}$ on the interval $[\tau_1,\tau_2]$, we have
    \begin{equation}\label{estimate6}
        \varphi^{(k_0-1)}(\tau_2)=\varphi^{(k_0-1)}(\tau_1)+\varphi^{(k_0)}(\tau_{**})(\tau_2-\tau_1),
    \end{equation}
    where $\tau_{**} \in (\tau_1,\tau_2)$. Using $(\ref{inequality1})$, since $\varphi^{(k_0)}(\tau_{**})<0$, we have $$\varphi^{(k_0-1)}(\tau_2)<\varphi^{(k_0-1)}(\tau_1) \left(1-k_0 \frac{\varphi^{(k_0)}(\tau_{**})}{\varphi^{(k_0)}(\tau_*)}\right).$$
    Using $\varphi^{(k_0-1)}(\tau_1)>0$, $\varphi^{(k_0-1)}(\tau_2)<0$, and $(\ref{inequality2})$, we have
    \begin{equation}\label{inequality3}
        |\varphi^{(k_0-1)}(\tau_2)|>(k_0-3/2)|\varphi^{(k_0-1)}(\tau_1)|.
    \end{equation}

    It remains to prove that $|\varphi^{(k_0-1)}(\tau_2)|>|\varphi^{(k_0-1)}(\tau_1)|+K(\tau_2-\tau_1)$ for some $K>0$ which depends only on $x_0$. From $(\ref{estimate6})$, since $\varphi^{(k_0-1)}(\tau_1)>0$ and $\varphi^{(k_0-1)}(\tau_2)<0$, we have $$\tau_2-\tau_1=\frac{\varphi^{(k_0-1)}(\tau_2)-\varphi^{(k_0-1)}(\tau_1)}{\varphi^{(k_0)}(\tau_{**})}=\frac{|\varphi^{(k_0-1)}(\tau_1)|+|\varphi^{(k_0-1)}(\tau_2)|}{-\varphi^{(k_0)}(\tau_{**})}.$$
    Then, using $(\ref{inequality2})$ and $(\ref{inequality3})$, we have $$\tau_2-\tau_1 < \frac{2k_0-1}{2k_0-3}.\frac{|\varphi^{(k_0-1)}(\tau_2)|}{-\varphi^{(k_0)}(\tau_{**})}<-\frac{2k_0}{2k_0-3}L^{k_0}_f h(x_0)|\varphi^{(k_0-1)}(\tau_2)|.$$
    Then
    \begin{align*}
        \frac{|\varphi^{(k_0-1)}(\tau_2)|-|\varphi^{(k_0-1)}(\tau_1)|}{\tau_2-\tau_1}&>-\frac{2k_0-3}{2k_0}L^{k_0}_f h(x_0)\left(1-\frac{|\varphi^{(k_0-1)}(\tau_1)|}{|\varphi^{(k_0-1)}(\tau_2)|}\right)\\
        &\overset{(\ref{inequality3})}{>}-\frac{2k_0-5}{2k_0}L^{k_0}_f h(x_0).
    \end{align*}
    We set $K:=-\frac{2k_0-5}{2k_0}L^{k_0}_f h(x_0)>0$. Then $|\varphi^{(k_0-1)}(\tau_2)|>|\varphi^{(k_0-1)}(\tau_1)|+K(\tau_2-\tau_1)$. This completes the proof.
\end{proof}

\begin{theorem}
    If $k_0 > 2$, then for each $x_0 \in S_2 \cap \overline{S_+ \cup S_-}$, there exists a trajectory of $F$ starting from $x_0$.
\end{theorem}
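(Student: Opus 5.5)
Since the trajectory must chatter (Proposition \ref{chattering}), I will not try to build it directly from $x_0$. Instead I will build chattering trajectories starting from nearby points of $S_+\cup S_-$ and pass to the limit, using the convexified inclusion $\conv F$ (which has convex compact values and is upper semicontinuous) as the closed framework. At the end, Proposition \ref{coincidence} will upgrade the limit trajectory back to a trajectory of $F$.

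Fix $x_0 \in S_2 \cap \overline{S_+\cup S_-}$. Let $K$ and $V_{x_0}$ be as in Proposition \ref{invariantSwitchings}, and shrink $V_{x_0}$ to a closed ball $\overline{B}(x_0,r)$. On this ball $|f|,|g|\le M$, and the inequalities $L^{k_0}_f h<0<L^{k_0}_g h$ hold there. Set $T:=r/(2M)$.

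First I pick a sequence $y_j \in S_+\cup S_-$ with $y_j\to x_0$ and $|y_j-x_0|<r/2$. Starting from $y_j$, I apply Corollary \ref{extension} repeatedly. Each arc stays in $\widetilde\Omega_\pm$ and ends at a point of $S_\mp$, and there the construction restarts. In this way I obtain a trajectory $x_j$ of $F$ with switching times $0=\tau^j_0<\tau^j_1<\dots$. As long as the trajectory remains in $\overline{B}(x_0,r)$, which is guaranteed on $[0,T]$ by the speed bound, Proposition \ref{invariantSwitchings} shows two things:
\begin{itemize}
\item the switching points alternate between $S_+$ and $S_-$;
\item the quantity $|L^{k_0-1}h(x_j(\tau^j_i))|$ increases by at least $K(\tau^j_{i+1}-\tau^j_i)$ at each switch.
\end{itemize}
Because $L^{k_0-1}h$ is bounded on the ball, say by $C$, the total time covered by the switchings is at most $C/K$. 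In particular the switching times cannot accumulate before $T$ unless the trajectory exits. This is the key use of the monotonicity estimate: it prevents Zeno accumulation of switchings, so $x_j$ is defined on all of $[0,T]$. If $\bar T=+\infty$ in Corollary \ref{extension}, or if the switches stop, the trajectory simply continues as a single arc.

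Next, the family $\{x_j\}$ is uniformly Lipschitz with constant $M$ on $[0,T]$, and $x_j(0)\to x_0$. By Arzel\`a--Ascoli, a subsequence converges uniformly to an $M$-Lipschitz function $\bar x$ with $\bar x(0)=x_0$, and the derivatives converge weakly in $L^1$. The standard convergence theorem for upper semicontinuous convex-valued inclusions (Filippov, Deimling) then shows that $\bar x$ is a trajectory of $\conv F$ on $[0,T]$. Proposition \ref{coincidence} gives $\dot{\bar x}\in F(\bar x)$ almost everywhere on the open set $W=\{t:\bar x(t)\notin S_0\}$.

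The main obstacle is the complementary set $Z:=\{t:\bar x(t)\in S_0\}$. In $\overline{B}(x_0,r)$ this set lies in $S_2$, and I must show it has measure zero. At almost every density point $t$ of $Z$, the function $\Phi=h\circ\bar x$ vanishes together with its derivatives up to order $k_0-1$, by Proposition \ref{derivatives}. Since $\Phi^{(k_0-1)}=L^{k_0-1}h(\bar x)$ is absolutely continuous and vanishes on $Z$, its derivative $DL^{k_0-1}h(\bar x)\dot{\bar x}$ is zero almost everywhere on $Z$. On the other hand, $\dot{\bar x}=\lambda f+(1-\lambda)g$ there, so I need $\lambda L^{k_0}_f h+(1-\lambda)L^{k_0}_g h=0$, which forces the particular value $\lambda^*=L^{k_0}_gh/(L^{k_0}_gh-L^{k_0}_fh)$. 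Nothing here contradicts $\conv F$ yet, so something more is required. I would first try to prevent $\bar x$ from dwelling in $S_2$ quantitatively, using the growth estimate: along $x_j$, $|L^{k_0-1}h|$ grows at least linearly, $|L^{k_0-1}h(x_j(t))|\ge Kt$ after the first switch, up to an $o(1)$ error as $j\to\infty$. Passing this to the limit gives $|L^{k_0-1}h(\bar x(t))|\ge Kt>0$ for $t>0$. Hence $\bar x(t)\notin S_0$ for every $t\in(0,T]$, so $Z=\{0\}$, and $\bar x$ is a trajectory of $F$. Controlling the $o(1)$ error, which comes from $|L^{k_0-1}h(y_j)|$ and from the time before the first switch, is the delicate point.
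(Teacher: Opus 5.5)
Your architecture matches the paper's: approximate $x_0$ from $S_+\cup S_-$, concatenate the arcs of Corollary \ref{extension}, use the growth estimate of Proposition \ref{invariantSwitchings}, take an Arzel\`a--Ascoli limit that solves $\conv F$, and conclude with Proposition \ref{coincidence}. However, the decisive step, $\bar x(t)\notin S_2$ for $t\in(0,T]$, is not established, and the route you propose for it fails. Proposition \ref{invariantSwitchings} bounds $|L^{k_0-1}h|$ only at switching times. Its proof shows that on each arc between consecutive switchings, $L^{k_0-1}h(x_j(\cdot))$ is strictly monotone and changes sign, so it vanishes at an interior time of every arc. Thus $|L^{k_0-1}h(x_j(t))|\ge Kt-o(1)$ is false at those times, and the failure is of order one, not $o(1)$. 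Only times that are limits of switching times of the $x_j$ inherit the bound. The error terms you worry about are in fact harmless. Since $y_j\in S_\pm$, the first arc already satisfies Proposition \ref{invariantSwitchings}, so $|L^{k_0-1}h(x_j(s))|\ge|L^{k_0-1}h(y_j)|+Ks$ holds exactly at every switching time $s$.

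The missing idea is a dichotomy based on the last switching time. Suppose $\bar x(t_0)\in S_2$ for some $t_0\in(0,T]$. Let $s_j$ be the last switching time of $x_j$ in $[0,t_0]$ ($s_j=0$ if there is none), and pass to a subsequence with $s_j\to s_0$.

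\emph{Case $s_0=t_0$.} Then $x_j(s_j)\to\bar x(t_0)$ by the uniform Lipschitz bound, and the switching-time estimate yields $|L^{k_0-1}h(\bar x(t_0))|\ge Kt_0>0$, contradicting $\bar x(t_0)\in S_2$.

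\emph{Case $s_0<t_0$.} For large $j$ and some $\delta>0$, $x_j$ has no switching in $[t_0-\delta,t_0]$, so it is a single arc there, say of $f$ inside $\widetilde\Omega_+$ (after a further subsequence). Hence $\bar x$ is an $f$-trajectory there with $h(\bar x)\ge 0$. On the other hand, at $\bar x(t_0)$ we have $h=L^1h=\dots=L^{k_0-1}h=0$, $L^{k_0}_fh<0$, and $k_0$ is even, so the Taylor expansion at $t_0$ gives $h(\bar x(t))<0$ for $t<t_0$ near $t_0$. The $g$ case is symmetric.

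Finally, your no-Zeno step is a non sequitur. The bound $C/K$ on the total switching time does not stop switching times from accumulating. What does stop them is that at switching points $|L^{k_0-1}h|\ge|L^{k_0-1}h(y_j)|>0$, so an accumulation point would lie outside $S_0$, contradicting Remark \ref{accumulation}.
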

\begin{proof}
    We fix an arbitrary point $x_0 \in S_2 \cap \overline{S_+ \cup S_-}$. We denote by $V_{x_0}$ the neighborhood of $x_0$ and by $K$ the constant obtained in Proposition \ref{invariantSwitchings}.
    
    For each $x \in V_{x_0} \cap S_+$ and $\tau \ge 0$, by Corollary \ref{extension}, there exist $\hat{t}_x \in (0,+\infty]$ and a trajectory $z_{x,\tau}$ of $F$ on $[\tau,\tau+\hat{t}_x)$ such that $z_{x,\tau}(\tau)=x$, $z_{x,\tau}(t) \in \widetilde\Omega_+$ for each $t \in (\tau,\tau+\hat{t}_x)$. If $\hat{t}_x < +\infty$, then $z_{x,\tau}$ can be extended to $\tau+\hat{t}_x$ with $z_{x,\tau}(\tau+\hat{t}_x) \in P_o^-$.

    We set $t_x:=\sup\{t \in (0,\hat{t}_x): z_{x,\tau}(s) \in V_{x_0} \text{ for each } s \in [\tau,\tau+t)\}$. We note that $t_x < +\infty$. Indeed, otherwise, $\hat{t}_x=+\infty$ and $z_{x,\tau}(t) \in \widetilde\Omega_+ \cap V_{x_0}$ for each $t \in [\tau,+\infty)$. Then the function $w(t):=L^{k_0-1}_f h(z_{x,\tau}(t))$, $t \in [\tau,+\infty)$, is smooth and $w'(t)=L^{k_0}_f h(z_{x,\tau}(t))$ for each $t \ge \tau$ (see Remark \ref{derivativesY}). The definition of $V_{x_0}$ (see $(\ref{inequality2})$) implies that $w'(t)<\left(1-\frac{1}{2k_0}\right) L^{k_0}_f h(x_0)<0$ for each $t \ge \tau$. Then $w(t) \le w(\tau) + \left(1-\frac{1}{2k_0}\right) L^{k_0}_f h(x_0) (t-\tau)$ for $t \ge \tau$. Then $\lim_{t \to +\infty} w(t)=-\infty$, which is impossible, since $z_{x,\tau}$ lies in $V_{x_0}$ on $[\tau,+\infty)$ and the function $r \mapsto L^{k_0-1}_f h(r)$ is bounded on $V_{x_0}$. Therefore, $t_x < \infty$.

    There are two possibilities for the trajectory $z_{x,\tau}$ at time $\tau+t_x$: either $z_{x,\tau}(\tau+t_x) \in \overline{V_{x_0}}\setminus V_{x_0}$ or $z_{x,\tau}(\tau+t_x) \in V_{x_0} \cap S_-$. Since $z_{x,\tau}$ is absolutely continuous and lies in $V_{x_0}$ on $[\tau,\tau+t_x)$, it follows that $z_{x,\tau}(\tau+t_x) \in \overline{V_{x_0}}$. Suppose that $z_{x,\tau}(\tau+t_x) \notin \overline{V_{x_0}}\setminus V_{x_0}$. Then $z_{x,\tau}(\tau+t_x) \in V_{x_0}$, and thus $t_x=\hat{t}_x$. Indeed, if $t_x < \hat{t}_x$, since $V_{x_0}$ is open, it follows that there exists $\delta>0$ such that $t_x+\delta < \hat{t}_x$ and $z_{x,\tau}(t) \in V_{x_0}$ for each $t \in [\tau+t_x,\tau+t_x+\delta)$, which contradicts the definition of $t_x$. Consequently, $t_x=\hat{t}_x$, and then Proposition \ref{invariantSwitchings} implies that $z_{x,\tau}(\tau+t_x) \in S_-$ and $|L^{k_0-1} h(z_{x,\tau}(\tau+t_x))| > |L^{k_0-1} h(z_{x,\tau}(\tau))|+Kt_x$.
    The above considerations apply analogously to $x \in V_{x_0} \cap S_-$.
    
    Let $k$ be an arbitrary positive integer. Then there exists a point $x_k \in V_{x_0} \cap (S_+ \cup S_-)$ such that $\|x_k-x_0\|<\frac{1}{k}$. We will construct inductively a trajectory $y_k$ of $F$ defined on an interval $[0,T_k]$ such that $y_k(0)=x_k$, $y_k(t) \in V_{x_0}$ for each $t \in [0,T_k)$, $y_k(T_k) \in \overline{V_{x_0}} \setminus V_{x_0}$, and $|L^{k_0-1} h(y_k(s))| \ge |L^{k_0-1} h(x_k)|+Ks$ for each switching time $s \in [0,T_k]$ of $y_k$.
    
    We set $y_k(t):=z_{x_k,0}(t)$ for each $t \in [0,t_{x_k}]$.
    Assume that $y_k$ is defined on some interval $[0,t_0]$, $y_k(t) \in V_{x_0}$ for each $t \in [0,t_0)$, $y_k(t_0) \in \overline{V_{x_0}}\setminus V_{x_0}$ or $y_k(t_0) \in V_{x_0} \cap (S_+ \cup S_-)$, and $|L^{k_0-1} h(y_k(s))| \ge |L^{k_0-1} h(x_k)|+Ks$ for each switching time $s \in [0,t_0]$ of $y_k$.
    
    If $y_k(t_0) \in \overline{V_{x_0}}\setminus V_{x_0}$, we set $T_k:=t_0$ and the construction of $y_k$ is completed. Otherwise, if $y_k(t_0) \in V_{x_0} \cap (S_+ \cup S_-)$, then we extend $y_k$ by setting $y_k(t)=z_{y_k(t_0),t_0}(t)$ for each $t \in (t_0,t_0 + t_{y_k(t_0)}]$. Then $y_k(t) \in V_{x_0}$ for each $t \in (t_0,t_0 + t_{y_k(t_0)})$. Moreover, either $y_k(t_0 + t_{y_k(t_0)}) \in \overline{V_{x_0}}\setminus V_{x_0}$ or $y_k(t_0 + t_{y_k(t_0)}) \in V_{x_0} \cap (S_+ \cup S_-)$. In the latter case, we also have
    \begin{align*}
        |L^{k_0-1} h(y_k(t_0 + t_{y_k(t_0)}))|&=|L^{k_0-1} h(z_{y_k(t_0),t_0}(t_0 + t_{y_k(t_0)}))|\\
        &> |L^{k_0-1} h(z_{y_k(t_0),t_0}(t_0))| + Kt_{y_k(t_0)}\\
        &=|L^{k_0-1} h(y_k(t_0))| + Kt_{y_k(t_0)}\\
        &>|L^{k_0-1} h(x_k)| + K(t_0+t_{y_k(t_0)}).
    \end{align*}
    Thus, $|L^{k_0-1} h(y_k(s))| \ge |L^{k_0-1} h(x_k)|+Ks$ for each switching time $s \in [0,t_0+ t_{y_k(t_0)}]$ of $y_k$, since $t_0+t_{y_k(t_0)}$ is the only switching time of $y_k$ on the interval $(t_0,t_0 + t_{y_k(t_0)}]$.

    The above-described inductive process terminates after finitely many steps. Otherwise, since at each step of the induction a new switching time of $y_k$ is being constructed, there would exist a strictly increasing sequence $\{s_j\}_{j=1}^{\infty}$ of switching times of $y_k$ such that $|L^{k_0-1} h(y_k(s_j))| \ge |L^{k_0-1} h(x_k)|+Ks_j$ for each $j \ge 1$. If this sequence is bounded, it converges to some $s_0$. Then $|L^{k_0-1} h(y_k(s_0))| \ge |L^{k_0-1} h(x_k)|+Ks_0>0$, and thus $y_k(s_0) \notin S_2$. This is a contradiction since each accumulation point of switching points necessarily belongs to $S_2$ (see Proposition \ref{onOmega0}). Thus, this sequence is unbounded, and hence $\lim_{j \to \infty} s_j = +\infty$. Thus, $\lim_{j \to \infty} |L^{k_0-1} h(y_k(s_j))|=+\infty$, which is impossible since the function $r \mapsto L^{k_0-1} h(r)$ is bounded on $V_{x_0}$ and $y_k$ lies in $V_{x_0}$ on its interval of definition. Therefore, the inductive process must terminate after finitely many steps. 
    
    We denote by $T_k$ the corresponding final time of $y_k$. We have $$\left\|y_k(T_k)-x_k\right\| = \left\|\int_{0}^{T_k} \dot{y}_k(\tau)\mathrm{d}\tau\right\| \le \int_{0}^{T_k} \left\|\dot{y}_k(\tau)\right\|\mathrm{d}\tau \le C T_k,$$ where $C$ is an upper bound for the values of $F$ on $V_{x_0}$.
    Since $y_k(T_k) \in \overline{V_{x_0}} \setminus V_{x_0}$, $\mathrm{dist}\left(x_k,\mathbb{R}^n \setminus V_{x_0}\right) \le \|y_k(T_k)-x_k\| \le C T_k$. Since $V_{x_0}$ is an open subset of $\mathbb{R}^n$, we have
    \begin{equation}\label{time}
        T_k \ge \frac{\mathrm{dist}(x_k,\mathbb{R}^n \setminus V_{x_0})}{C} > 0.
    \end{equation}

    Repeating the above-described inductive process for each positive integer $k$, we obtain a sequence $\{y_k\}_{k=1}^{\infty}$ of trajectories of $F$ such that, for each $k \ge 1$, $y_k$ is defined on $[0,T_k]$, $y_k(0)=x_k$, $y_k(t) \in V_{x_0}$ for each $t \in [0,T_k)$, $y_k(T_k) \in \overline{V_{x_0}} \setminus V_{x_0}$, and $|L^{k_0-1} h(y_k(s))| \ge |L^{k_0-1} h(x_k)|+Ks$ for each switching time $s \in [0,T_k]$ of $y_k$.

    Since $\|x_k-x_0\|<\frac{1}{k}$ for each integer $k \ge 1$, it follows that $\lim_{k \to \infty} x_k=x_0$. Then there exists a positive integer $k_0$ such that for all $k \ge k_0$ we have $\|x_k-x_0\| \le \frac{1}{2} \mathrm{dist}(x_0,\mathbb{R}^n \setminus V_{x_0})$. Using the lipschitzness of the distance function and the fact that $V_{x_0}$ is an open subset of $\mathbb{R}^n$, for each $k \ge k_0$ we have
    \begin{equation*}
        \begin{aligned}
            T_k \overset{(\ref{time})}{\ge} \frac{\mathrm{dist}(x_k,\mathbb{R}^n \setminus V_{x_0})}{C} & \ge \frac{\mathrm{dist}(x_0,\mathbb{R}^n \setminus V_{x_0}) - \|x_k-x_0\|}{C}\\ &\ge T:= \frac{\mathrm{dist}(x_0,\mathbb{R}^n \setminus V_{x_0})}{2C} > 0.
        \end{aligned}
    \end{equation*}

    Since $F$ is bounded on $\overline{V_{x_0}}$, the sequence $\{y_k(\cdot)\}_{k=k_0}^{\infty}$ of functions defined on the interval $[0,T]$ is uniformly bounded and equicontinuous. Applying the Arzel\`{a}-Ascoli theorem, without loss of generality, we may assume that the sequence converges uniformly to an absolutely continuous function $y : [0,T] \to \overline{V_{x_0}}$.

    Since $y_k$ is a trajectory of $F$ on $[0,T]$ for each integer $k\ge k_0$, and $F$ is upper semi-continuous with nonempty compact values on $\overline{V_{x_0}}$, it follows that $y$ is a trajectory of $\conv F$ on $[0,T]$, that is, $\dot{y}(t) \in \conv F(y(t))$ for almost every $t \in [0,T]$ (see, e.g., Corollary 2 on page 77 of \cite{Filippov88}). We also have $y(0)=\lim_{k \to \infty} y_k(0)=\lim_{k \to \infty} x_k=x_0$.

    We will show that $y(t) \notin S_2$ for each $t \in (0,T]$. It will then follow from Proposition \ref{coincidence} that $y$ is a trajectory of $F$ on $[0,T]$, which will complete the proof.

    Suppose that there exists $t_0 \in (0,T]$ such that $y(t_0) \in S_2$. For each integer $k \ge k_0$, we set 
    $$s_k:=\max\{s\in[0,t_0]:s\text{ is a switching time of }y_k\},$$ if $y_k$ has a switching time in $[0,t_0]$; otherwise, we set $s_k:=0$. The maximum exists in the former case since the set of switching times of $y_k$ is finite.

    Since $s_k \in [0,t_0]$ for each $k \ge k_0$, without loss of generality, we may assume that the sequence $\{s_k\}_{k=k_0}^{\infty}$ converges to some $s_0 \in [0,t_0]$. Suppose that $s_0 < t_0$. Then there exist an integer $k_1 \ge k_0$ and $\delta \in (0,t_0-s_0)$ such that $s_k < t_0-\delta$ for each $k \ge k_1$. Thus, $y_k$ has no switching times on the interval $[t_0-\delta,t_0]$ for each $k \ge k_1$. Therefore, for each $k \ge k_1$, either $y_k(t) \in \widetilde\Omega_+$ for each $t \in [t_0-\delta,t_0)$ or $y_k(t) \in \widetilde\Omega_-$ for each $t \in [t_0-\delta,t_0)$. Thus, for each $k \ge k_1$, $y_k$ satisfies either $\dot{y}_k(t) = f(y_k(t))$ for each $t \in [t_0-\delta,t_0)$ or  $\dot{y}_k(t) = g(y_k(t))$ for each $t \in [t_0-\delta,t_0)$. Without loss of generality, we may assume that $y_k$ satisfies $\dot{y}_k(t) = f(y_k(t))$ for each $t \in [t_0-\delta,t_0)$ and each $k \ge k_1$. 
    (In fact, since $f(x) \ne g(x)$ for each $x \in \Omega_0$, and hence $f(y(t_0)) \ne g(y(t_0))$, it follows that only one of the two alternatives can hold for infinitely many $k \ge k_1$.)
    Since $\{y_k\}_{k=k_1}^{\infty}$ converges uniformly to $y$ on $[t_0-\delta,t_0]$, it follows that $\dot{y}(t)=f(y(t))$ for each $t \in [t_0-\delta,t_0]$, which contradicts ???, since $y(t_0) \in S_2$. Consequently, $s_0=t_0$.

    For each $k \ge k_1$, from the construction of the trajectory $y_k$ above, we have
    \begin{equation}\label{inequality4}
        |L^{k_0-1} h(y_k(s_k))| \ge |L^{k_0-1} h(x_k)|+Ks_k.
    \end{equation}
    Since $y_k$, $k \ge k_1$, are Lipschitz on $V_{x_0}$ with a common Lipschitz constant, $\lim_{k \to \infty} s_k=t_0$ and $\lim_{k \to \infty} y_k(t_0)=y(t_0)$, we have $\lim_{k \to \infty} y_k(s_k) = y(t_0)$. Then, passing to the limit in $(\ref{inequality4})$ as $k \to \infty$, and using $\lim_{k \to \infty} x_k=x_0 \in S_2$ and the continuity of $L^{k_0-1} h$, we get $$|L^{k_0-1} h(y(t_0))| \ge Kt_0 > 0.$$
    Since $L^{k_0-1} h$ is identically zero on $S_2$, it follows that $y(t_0) \notin S_2$, contradicting our assumption that $y(t_0) \in S_2$. Thus, $y(t) \notin S_2$ for each $t \in (0,T]$. Then Proposition \ref{coincidence} implies that $y$ is a trajectory of $F$ on $[0,T]$. This completes the proof.
\end{proof}

\section{An example}

Will be added.

\end{document}